\documentclass[11pt,reqno]{amsart}
\setlength{\textheight}{23cm}
\setlength{\textwidth}{16cm}
\setlength{\topmargin}{-0.8cm}
\setlength{\parskip}{0.3\baselineskip}
\hoffset=-1.4cm

\usepackage{amsmath,amssymb,xypic,mathrsfs}

\usepackage{tikz-cd}

\numberwithin{equation}{section}

\theoremstyle{plain}
				
\newtheorem{theorem}{Theorem}[section]				
\newtheorem{proposition}[theorem]{Proposition}		
\newtheorem{corollary}[theorem]{Corollary}
\newtheorem{lemma}[theorem]{Lemma}
\newtheorem{question}[theorem]{Question}

\theoremstyle{definition}
\newtheorem{definition}[theorem]{Definition}

\newcommand{\afrak}{\mathfrak a}
\newcommand{\dfrak}{\mathfrak d}
\newcommand{\gfrak}{\mathfrak g}
\newcommand{\lfrak}{\mathfrak l}
\newcommand{\ofrak}{\mathfrak o}
\newcommand{\sfrak}{\mathfrak s}
\newcommand{\SL}{\mathsf{SL}}
\newcommand{\SO}{\mathsf{SO}}
\newcommand{\Spin}{\mathsf{Spin}}
\DeclareMathOperator{\ad}{ad}
\DeclareMathOperator{\Ad}{Ad}

\begin{document}

\title[Torsors on moduli spaces of principal $G$-bundles]{Torsors on moduli spaces of principal $G$-bundles on 
curves}
	
\author[I. Biswas]{Indranil Biswas}

\address{Department of Mathematics, Shiv Nadar University, NH91, Tehsil
Dadri, Greater Noida, Uttar Pradesh 201314, India}

\email{indranil.biswas@snu.edu.in, indranil29@gmail.com}

\author[S. Mukhopadhyay]{Swarnava Mukhopadhyay}

\address{School of Mathematics, Tata Institute of Fundamental Research,
Homi Bhabha Road, Mumbai 400005, India}

\email{swarnava@math.tifr.res.in}
	
\thanks{I.B. is supported in part by a J.C. Bose fellowship (JBR/2023/000003) and S.M. by DAE, India
under project no. 1303/3/2019/R\&D/IIDAE/13820.}

\subjclass[2010]{14H60, 16S32, 14D21, 53D30}

\keywords{Principal bundle, conformal block, moduli space, connection, torsor}

\begin{abstract}
Let $G$ be a semisimple complex algebraic group with a simple Lie algebra $\gfrak$, and let
$\mathcal{M}^0_{G}$ denote the moduli stack of topologically trivial stable $G$-bundles on a
smooth projective curve $C$. Fix a theta characteristic $\kappa$ on $C$ which is
even in case $\dim{\gfrak}$ is odd. We show that there is a nonempty Zariski open substack ${\mathcal U}_\kappa$
of $\mathcal{M}^0_{G}$ such that $H^i(C,\, \text{ad}(E_G)\otimes\kappa) \,=\, 0$, $i\,=\, 1,\, 2$,
for all $E_G\,\in\, {\mathcal U}_\kappa$. It is shown that any such $E_G$ has a canonical connection.
It is also shown that the tangent bundle $T{U}_\kappa$ has a natural splitting, where $U_{\kappa}$ is
 the restriction of $\mathcal{U}_{\kappa}$ to the semi-stable locus. We also produce
an isomorphism between two naturally occurring $\Omega^1_{{M}^{rs}_{G}}$--torsors on the moduli space of regularly stable ${M}^{rs}_{G}$. 
\end{abstract}

\maketitle

\section{Introduction}

Let $C$ be a smooth complex projective curve of genus $g$, with $g\, \geq\, 2$, and let $G$ be a complex semisimple affine algebraic
group with a simple Lie algebra $\gfrak$. Denote by $\mathcal{M}^0_{G}$ the moduli stack of topologically trivial stable $G$-bundles on
$C$. Fix a theta characteristic $\kappa$ on $C$. We assume that $\kappa$ is even when $\dim \gfrak$ is odd. We prove that the following
(see Corollary \ref{cor:nonzero2}):
\begin{theorem}\label{thm:main1}
{There is a nonempty Zariski open substack ${\mathcal U}_\kappa$
of $\mathcal{M}^0_{G}$ such that
$$
H^0(C,\, {\rm ad}(E_G)\otimes\kappa) \,=\, 0\,=\, H^1(C,\, {\rm ad}(E_G)\otimes\kappa)
$$
for all $E_G\,\in\, {\mathcal U}_\kappa$.}
\end{theorem}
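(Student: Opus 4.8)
\medskip
\noindent\emph{Sketch of a proof.}
Since $\gfrak$ is semisimple the adjoint bundle $\ad(E_G)$ has degree $0$, so, as $\deg\kappa=g-1$, Riemann--Roch gives $\chi(C,\ad(E_G)\otimes\kappa)=0$; moreover the Killing form identifies $\ad(E_G)^{\vee}$ with $\ad(E_G)$, and $\kappa^{-1}\otimes K_C\cong\kappa$, so Serre duality yields $H^1(C,\ad(E_G)\otimes\kappa)\cong H^0(C,\ad(E_G)\otimes\kappa)^{\vee}$. Hence $h^0=h^1$ for this sheaf, and it suffices to prove that $H^0(C,\ad(E_G)\otimes\kappa)=0$ for a general $E_G$. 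By upper semicontinuity of $h^0$ along an (\'etale local) universal family over $\mathcal M^0_G$, the locus $\mathcal U_\kappa$ where this happens is Zariski open; so the whole point is to exhibit one stable, topologically trivial $E_G$ with $H^0(C,\ad(E_G)\otimes\kappa)=0$.

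When $h^0(C,\kappa)=0$ (the generic situation) this is easy. Take a maximal torus $T\subset G$ and a general topologically trivial $T$-bundle $E_T$, and set $E^{0}_G=E_T\times_T G$. Then $\ad(E^0_G)\cong\mathcal O_C^{\oplus r}\oplus\bigoplus_{\alpha}L_\alpha$ with $r=\operatorname{rk}\gfrak$, $\alpha$ ranging over the roots of $\gfrak$, and the $L_\alpha$ general line bundles of degree $0$; hence $h^0(C,\ad(E^0_G)\otimes\kappa)=r\,h^0(C,\kappa)+\sum_\alpha h^0(C,L_\alpha\otimes\kappa)=0$, because $\deg(L_\alpha\otimes\kappa)=g-1$ and a general line bundle of that degree has no section. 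Although $E^0_G$ is only semistable it is a limit of stable bundles (the semistable locus is irreducible and the stable locus is dense in it), so by semicontinuity a general stable $E_G$ near $E^0_G$ is topologically trivial with $H^0(C,\ad(E_G)\otimes\kappa)=0$.

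For special $\kappa$---in particular for $\kappa$ odd, where $h^0(C,\kappa)$ is odd---the torus bundle carries $r\,h^0(C,\kappa)>0$ sections, and one must use the principal $\SL_2$ instead. Fix a principal homomorphism $\phi\colon\SL_2\to G$, so that $\gfrak\cong\bigoplus_{i=1}^{r}\operatorname{Sym}^{2m_i}(\mathbb C^2)$ as $\SL_2$-modules, with $1=m_1\le\cdots\le m_r$ the exponents of $\gfrak$. For $W$ a general stable rank two bundle with $\det W\cong\mathcal O_C$ (its Narasimhan--Seshadri representation then has Zariski-dense image in $\SL_2(\mathbb C)$), the bundle $E_G=W\times_\phi G$ is topologically trivial and stable---its monodromy is Zariski-dense in the principal $\SL_2$, which lies in no proper parabolic of $G$---and $\ad(E_G)\otimes\kappa\cong\bigoplus_{i=1}^{r}\bigl(\operatorname{Sym}^{2m_i}(W)\otimes\kappa\bigr)$. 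Thus everything reduces to the rank two assertion that a general such $W$ satisfies $H^0(C,\operatorname{Sym}^{2n}(W)\otimes\kappa)=0$ for all $n\le m_r$; this can hold only for $\kappa$ even, which is automatic when $\dim\gfrak$ is odd and is exactly the remaining hypothesis otherwise. In the odd case ($\dim\gfrak$, hence $r$, even) one instead perturbs $E_G$: here $h^0=r$, and a first-order deformation $\delta\in H^1(C,\ad(E_G))$ for which the bracket pairing $H^0(C,\ad(E_G)\otimes\kappa)\to H^1(C,\ad(E_G)\otimes\kappa)$, $s\mapsto[\delta,s]$, is bijective obstructs every section to first order, landing a general nearby stable bundle in $\mathcal U_\kappa$.

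The main obstacle is precisely this rank two vanishing (and the non-degeneracy of the bracket pairing in the odd case), equivalently the non-vanishing of the canonical section of $\det R\Gamma(C,\ad(\mathcal E)\otimes\kappa)^{-1}$ on $\mathcal M_G$. I would attack it by taking $W$ a general extension $0\to\xi^{-1}\to W\to\xi\to0$ with $2\deg\xi>g-1$, filtering $\operatorname{Sym}^{2n}(W)$ by the powers of $\xi^{-1}$---after twisting by $\kappa$ the low graded pieces become acyclic (here one uses $2\deg\xi>g-1$), the middle piece is $\kappa$ itself, and the high pieces have large $h^0$---and reducing the vanishing by a d\'evissage to injectivity of the connecting homomorphisms, each a cup product with a component of the extension class of $W$, which is injective for a general class by a dimension count making the relevant target cohomology groups large enough; the parity hypothesis enters only through the middle piece $\kappa$. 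Alternatively, and uniformly in $G$ and $\kappa$, the non-vanishing of the above section should follow by degenerating $C$ to a nodal rational curve and invoking the factorization properties of the line bundle, the trivial $G$-bundle on $\mathbb P^1$---where the relevant twist is $\gfrak\otimes\mathcal O_{\mathbb P^1}(-1)$, which is acyclic---serving as the witness at genus $0$.
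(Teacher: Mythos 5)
Your reductions at the start are fine: $\chi(\mathrm{ad}(E_G)\otimes\kappa)=0$, Serre duality with the Killing form gives $h^0=h^1$, the locus is open by semicontinuity, and exhibiting a single stable topologically trivial witness suffices; the torus-bundle witness does settle the case $h^0(C,\kappa)=0$. But the theorem is claimed for \emph{every} theta characteristic subject only to the parity hypothesis, and precisely in the remaining cases --- $\kappa$ even but effective (a vanishing theta-null), and $\kappa$ odd with $\dim\gfrak$ even --- your argument stops at an acknowledged obstacle rather than a proof. The rank-two vanishing $H^0(C,\mathrm{Sym}^{2n}(W)\otimes\kappa)=0$ for general stable $W$ with trivial determinant, and the existence of a class $\delta\in H^1(C,\mathrm{ad}(E_G))$ making the bracket pairing $s\mapsto[\delta,s]$ bijective, are not side technicalities: they are equivalent to the non-vanishing, on the relevant locus, of the Pfaffian-type section whose divisor is exactly $\Xi_\kappa$, i.e.\ they \emph{are} the theorem. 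Your proposed d\'evissage does not obviously close this: for a vanishing theta-null the ``middle piece'' $\kappa$ has $h^0(\kappa)=2\neq0$, so the filtration argument needs injectivity of connecting maps out of $H^0(\kappa)$, which is a Petri-type statement you only assert ``by a dimension count''; and in the odd-$\kappa$ case the existence of a $\delta$ with nondegenerate pairing is exactly the kind of statement one is trying to prove, not an input one can assume.

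For comparison, the paper does not argue bundle-by-bundle at all. It uses that $\gfrak\hookrightarrow\mathfrak{so}(\gfrak)$ is a conformal embedding, so the induced map of conformal blocks $\mathcal V_{\omega_0}(C,\gfrak,d_{\mathrm{ad}})\to\mathcal V_{\Lambda_0}(C,\mathfrak{so}(\gfrak),1)$ is flat for the projective connections and has constant rank; non-vanishing is then proved by induction on genus via factorization and propagation of vacua (with input from Boysal--Pauly/Mukhopadhyay at the node), the genus-zero case being trivial. Translated through uniformization, this says the pullback under $\mathrm{Ad}$ of the Pfaffian section --- which spans the one-dimensional $H^0(\mathcal M_{\mathsf{SO}_{\dim\gfrak}}^0,\mathcal P_\kappa)$ exactly under the stated parity condition on $\kappa$ (Belkale, Mukhopadhyay--Wentworth) --- is nonzero, i.e.\ $\mathrm{Ad}(\mathcal M^0_G)\not\subset\Xi_\kappa$, which is the theorem. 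Your one-sentence ``alternatively, degenerate $C$ to a nodal rational curve and invoke factorization'' is in fact this strategy, but as written it is a plan, not an argument: you would need the extension of the Pfaffian line bundle and its section to the nodal locus, the factorization rules, and the level-one branching under the conformal embedding --- the machinery the paper assembles. So the proposal has a genuine gap at the core non-vanishing statement.
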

Using the above result (see Corollary \ref{cor:nonzero2}) the following is proved (see Theorem \ref{th1}). This theorem generalizes earlier results of \cite{BH}, \cite{BB} for $G\,=\, \text{SL}(n,{\mathbb C})$:
\begin{theorem}
\textit{Any principal $G$--bundle $E_G\, \in\, {\mathcal U}_\kappa$. has a natural algebraic connection.}
\end{theorem}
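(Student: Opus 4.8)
The plan is to obtain the connection as a splitting of the Atiyah exact sequence of $E_G$: such a splitting exists automatically because $E_G$ is stable, and the theta characteristic $\kappa$ — through the vanishing of Corollary~\ref{cor:nonzero2} — will be used to single one out. Recall the Atiyah exact sequence
\begin{equation}\label{eq:At}
0\,\longrightarrow\,\text{ad}(E_G)\,\longrightarrow\,\text{At}(E_G)\,\stackrel{\sigma}{\longrightarrow}\,T_C\,\longrightarrow\,0,
\end{equation}
an algebraic connection on $E_G$ being precisely an $\mathcal O_C$--linear section of $\sigma$; since $\kappa^{\otimes2}\cong K_C$ we have $T_C\cong\kappa^{\otimes(-2)}$, so tensoring \eqref{eq:At} by $\kappa$ turns it into $0\to\text{ad}(E_G)\otimes\kappa\to\text{At}(E_G)\otimes\kappa\to\kappa^{-1}\to0$. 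The obstruction to splitting \eqref{eq:At} is its class in $H^1(C,\,\text{ad}(E_G)\otimes K_C)$; as $E_G$ is stable one has $H^0(C,\,\text{ad}(E_G))=0$, and Serre duality together with the isomorphism $\text{ad}(E_G)^{\ast}\cong\text{ad}(E_G)$ furnished by the Killing form gives $H^1(C,\,\text{ad}(E_G)\otimes K_C)\cong H^0(C,\,\text{ad}(E_G))^{\ast}=0$. Thus \eqref{eq:At} splits, and its splittings form an affine space over $H^0(C,\,\text{ad}(E_G)\otimes K_C)$, the (nonzero) cotangent space of $\mathcal M^0_G$ at $E_G$; the real task is therefore to select one splitting canonically.

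This is where $\kappa$ is used. Set $\mathcal F:=\text{ad}(E_G)\otimes\kappa$, which by Corollary~\ref{cor:nonzero2} is acyclic: $H^0(C,\mathcal F)=H^1(C,\mathcal F)=0$. Let $p_1,p_2\colon C\times C\to C$ be the two projections, put $\mathcal G:=p_1^{\ast}\mathcal F\otimes p_2^{\ast}\mathcal F$ — symmetric under the swap involution of $C\times C$ — and let $\Delta\subset C\times C$ be the diagonal, with normal bundle $T_C$. Using $\kappa^{\otimes2}\cong K_C$ and the Killing form one gets $\mathcal G(\Delta)|_{\Delta}\cong\text{ad}(E_G)\otimes\text{ad}(E_G)\cong\mathcal{E}nd(\text{ad}(E_G))$, while by the Künneth formula the acyclicity of $\mathcal F$ forces $H^{\bullet}(C\times C,\mathcal G)=0$; hence restriction to $\Delta$ is an isomorphism
\[
H^0\!\big(C\times C,\ \mathcal G(\Delta)\big)\ \xrightarrow{\ \sim\ }\ H^0\!\big(C,\ \mathcal{E}nd(\text{ad}(E_G))\big).
\]
Let $\mathcal K$ be the section corresponding to the identity endomorphism: a canonical reproducing kernel on $C\times C$ with a simple pole along $\Delta$ and residue the identity there; it is symmetric (by uniqueness, since $\mathcal G$ is symmetric and the identity is fixed by the Killing transpose), and via the Killing identification its residue is the Casimir tensor of $\gfrak$. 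The order--zero coefficient in the Laurent expansion of $\mathcal K$ transverse to $\Delta$ is then a global section of $\mathcal{E}nd(\text{ad}(E_G))\otimes K_C$; crucially, it is independent of the chosen local coordinate on $C$, because $\mathcal F$ is a half--form twist of $\text{ad}(E_G)$, so that the coordinate anomaly of the elementary kernel $(dz_1\,dz_2)^{1/2}/(z_1-z_2)$ is of second order (the Schwarzian) and does not reach the order--zero term. Using the symmetry of $\mathcal K$ and that its residue is the Casimir, one checks that this section is a derivation of $\text{ad}(E_G)$, i.e.\ defines a connection on the Lie algebra bundle $\text{ad}(E_G)$ compatible with the bracket; since $\gfrak$ is semisimple and the Atiyah bundle of $E_G$ agrees with that of the associated adjoint-group bundle, this is the same datum as an algebraic connection on $E_G$, produced canonically.

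The main obstacle is this last step: verifying that the order--zero coefficient of $\mathcal K$ really transforms as a connection form — which is exactly where one uses that $\kappa$ is an honest square root of $K_C$, available because of the parity hypothesis that also enters Corollary~\ref{cor:nonzero2} — and that the resulting endomorphism respects the Lie bracket. One should finally check that for $G=\text{SL}(n,\mathbb C)$ this recovers the connections of \cite{BH} and \cite{BB}.
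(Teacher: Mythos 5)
Your construction of the connection on the vector bundle $\mathrm{ad}(E_G)$ is essentially the paper's: acyclicity of $\mathrm{ad}(E_G)\otimes\kappa$ (Corollary \ref{cor:nonzero2}) plus K\"unneth gives the unique kernel on $C\times C$ with a pole on $\Delta$ whose residue is the Killing-form tensor $\gamma$, and its restriction to $2\Delta$, read through the canonical trivialization of $(p_1^*\kappa)\otimes(p_2^*\kappa)\otimes\mathcal{O}_{C\times C}(\Delta)$ on $2\Delta$ from \cite{BR}, yields an algebraic connection on $\mathrm{ad}(E_G)$ as in \cite{BB}, \cite{BH}. (Two small points of hygiene: the first-order coefficient is a connection, not a global section of $\mathcal{E}nd(\mathrm{ad}(E_G))\otimes K_C$ --- those are different torsors --- and the coordinate-independence you invoke is exactly the content of the $2\Delta$-trivialization of \cite{BR}, not a ``Schwarzian is second order'' remark, which is the statement relevant for projective structures. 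Also, your opening paragraph on the vanishing of the obstruction class only reproves the classical existence of connections on stable bundles and is not needed.)

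The genuine gap is the final descent from a connection on the vector bundle $\mathrm{ad}(E_G)$ to a connection on the principal bundle $E_G$. You assert that ``using the symmetry of $\mathcal K$ and that its residue is the Casimir'' the resulting connection is a derivation of the Lie-algebra bundle, i.e.\ bracket-compatible, and then descend via $\mathrm{Lie}(G)=\mathrm{Lie}(\mathrm{Aut}(\gfrak))$. But the kernel construction only uses the pair $(\mathrm{ad}(E_G),\ \text{Killing form})$: two different Lie brackets on the same underlying orthogonal bundle would produce the same kernel and the same connection, so the construction cannot ``see'' the bracket. Symmetry of $\mathcal K$ gives at best that the connection is orthogonal, i.e.\ its connection form is $\mathfrak{so}(\gfrak)$-valued; there is no argument that it lies in the much smaller subalgebra $\gfrak\subset\mathfrak{so}(\gfrak)$, which is what bracket-compatibility requires, and you yourself flag this as the unverified ``main obstacle.'' The paper does not prove (or need) any such compatibility: it instead uses Lemma \ref{lem1}, where the trace form on $\mathfrak{gl}(\gfrak)$ restricts nondegenerately to $\gfrak$, giving a $G$-equivariant projection $P:\mathfrak{gl}(\gfrak)\to\gfrak$; applying $P$ to the pullback of the $\mathfrak{gl}(\gfrak)$-valued connection form along $E_G\to E_{\mathrm{GL}(\gfrak)}$ canonically converts \emph{any} connection on $\mathrm{ad}(E_G)$ into a connection on $E_G$, with $\Phi\circ\Phi_0=\mathrm{Id}$. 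Replacing your derivation claim by this projection step would complete your argument and bring it in line with Theorem \ref{th1}.
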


Let ${M}^{rs}_{G}$ (respectively, ${M}^{0,ss}_{G}$) denote the locus of regularly stable (respectively, semi-stable) principal $G$-bundles
which are topologically trivial. We note that
${M}^{rs}_{G}$ is the smooth locus of ${M}^{0,ss}_{G}$ except the only case where $g\,=\,2$ and $G\,=\, \text{SL}(2,
{\mathbb C})$.

The tangent bundle $T({\mathcal U}_\kappa\bigcap {M}^{rs}_{G})$ of ${\mathcal U}_\kappa\bigcap {M}^{rs}_{G}$ (the intersection is
happening in the moduli stack of regularly stable bundles and then is projected to the moduli space)
decomposes into a direct sum of vector bundles. This
raises the natural question whether distributions on ${\mathcal U}_\kappa\bigcap {M}^{rs}_{G}$ given by these
direct summands are integrable. (See Question \ref{q1}.)

Now we focus our attention to two natural torsors on ${\mathcal U}_\kappa\bigcap {M}^{rs}_{G}$ for its cotangent bundle
$T^*({\mathcal U}_\kappa\bigcap {M}^{rs}_{G})$. The first $T^*({\mathcal U}_\kappa\bigcap {M}^{rs}_{G})$--torsor is
defined by the moduli space of connections ${\mathcal C}_G$. More precisely, ${\mathcal C}_G$ is the moduli space of
pairs of the form $(E_G,\, D)$, where $E_G\, \in\, {\mathcal U}_\kappa\bigcap {M}^{rs}_{G}$ and $D$ is an
algebraic connection on $E_G$. 

Fix an ample line bundle $L$ on ${\mathcal U}_\kappa\bigcap {M}^{rs}_{G}$. The second
$T^*({\mathcal U}_\kappa\bigcap {M}^{rs}_{G})$--torsor is given by the sheaf of connections ${\rm Conn}(L)$ on $L$.
It is shown that these two $T^*({\mathcal U}_\kappa\bigcap {M}^{rs}_{G})$--torsors are isomorphic
(see Section \ref{sec-tor}). This was proved earlier in \cite{BH} for $G\,=\, \text{SL}(n,{\mathbb C})$.

We end the introduction by briefly commenting on the organization of the paper. We start by recalling the notion of conformal embeddings of 
semisimple Lie algebras and the branching rule of the affine Lie algebras arising from the adjoint representations. Conformal embedding are 
special as they force finiteness of the branching rules of some infinite dimensional representation of affine Kac-Moody algebras. Next we 
discuss functorial maps between spaces of conformal blocks induced by conformal embeddings and use the identifications of conformal blocks 
with global sections of line bundles on $\mathcal{M}_G$ to prove Theorem \ref{thm:main1}. Theorem \ref{th1} is established
in Section \ref{se5}. Section \ref{se6} shows the above mentioned isomorphism of torsors. In Section \ref{se7} the decomposition of the
tangent bundle is constructed and Question \ref{q1} is posed.

\section{Conformal Embedding and adjoint representations}

In this section, we first recall the notion of conformal embedding for affine Lie algebra and consider a particular case of a conformal 
embedding given by the adjoint representation. We also recall the branching rule for this case.

\subsection{Conformal Embedding}

Let $\gfrak$ be a simple finite dimensional Lie algebra over $\mathbb{C}$. Consider
the corresponding untwisted affine Lie algebra $$\widehat{\gfrak}
\,\,:=\,\,\gfrak\otimes \mathbb{C}((\xi))\oplus \mathbb{C}c,$$ where $c$ is a central element. Fix
a Cartan subalgebra of $\gfrak$ and also a Borel subalgebra containing the Cartan
subalgebra. Let $(\ , \ )$ be
the normalized Killing form on $\gfrak$ such that $(\theta,\, \theta)\,=\,2$, where $\theta$ is the highest root
of $\gfrak$, and $P_+$ denotes the set of dominant integral weights. For any positive integer $\ell$, consider the set 
\begin{equation}
P_{\ell}(\gfrak)\,\,:=\,\, \{ \lambda \in P_+\,\,\mid\,\, (\lambda, \theta)\,\leq\, \ell \}\,\subsetneq\, P_+.
\end{equation}
The set $P_{\ell}(\gfrak)$ parameterizes irreducible, integrable representations of the Lie
algebra $\widehat{\gfrak}$. Let $\lambda\,\in\, P_{\ell}(\gfrak)$; then the corresponding integrable
representation will be denoted by $\mathcal{H}_{\lambda,\ell}(\gfrak)$. We will often drop $\ell$ and
$\gfrak$ from the notation of integrable highest weight modules when the context is evident. 
They satisfy the following properties: 
\begin{itemize}
	\item $\mathcal{H}_{\lambda}$ is graded.
	\item The finite dimensional $\gfrak$ module $V_{\lambda}$ is contained in $\mathcal{H}_{\lambda}$ as the degree zero part.
	\item The scalar $c$ acts on $\mathcal{H}_{\lambda}$ as multiplication by the integer $\ell$. 
\end{itemize}

Now consider a orthonormal basis $\{J^1,\,\cdots,\, J^{\dim \gfrak}\}$ of the Lie algebra $\gfrak$. The 
Sugawara construction $$L^{\gfrak}_{n,\ell}\,\,:=\,\, \frac{1}{2(\ell+h^{\vee}(\gfrak))}\sum_{m\in 
\mathbb{Z}}\sum_{a=1}^{\dim \gfrak}{}^{\circ}_{\circ}J^a(m)J^a(n-m){}^{\circ}_{\circ}$$ gives an action of the 
Virasoro algebra on $\mathcal{H}_{\lambda}$ at level $\ell$. Here $h^{\vee}(\gfrak)$ denotes the dual Coxeter 
number of $\gfrak$ and $X(m)\,:=\,X\otimes \xi^m$ for any $X\,\in\, \gfrak$. We recall the definition of conformal 
embedding from Kac-Wakimoto \cite[p.~210, Section 4.2]{KacWakimoto:88}.

\begin{definition}\label{def:conformal}
An embedding $\varphi\,:\, \gfrak_1 \,\longrightarrow\, \gfrak_2$ of simple Lie algebras
is conformal if the following equality holds
\begin{equation}\label{eqn:centralcharge}
\frac{d_{\varphi}\dim \gfrak_1}{d_{\varphi}+ h^{\vee}(\gfrak_1)}
\,\,=\,\,\frac{\dim \gfrak_2}{1+ h^{\vee}(\gfrak_2)},
\end{equation}
where $d_{\varphi}$ is the ratio of the normalized Killing form of
the embedding $\varphi$ which is also known as the Dynkin index.
\end{definition}

A key feature of conformal embedding is the equality of Virasoro operators 
$L_{n,d_{\varphi}}^{\gfrak_1}\,=\,L_{n,1}^{\gfrak_2}$ as operators on level one integrable 
representation of $\widehat{\gfrak}_2$ under the obvious restriction via $\varphi$ 
(see \cite[p.~201, Corollary 3.2.1--3.2.2]{KacWakimoto:88}). Conformal embedding of semi-simple
Lie algebras into simple Lie algebras has been classified by Schellekens--Warner \cite{SW} and
independently by Bais--Bouwknegt \cite{BaisBouwknegt:87}

\subsection{The adjoint representation}

Let $\gfrak$ be a simple Lie algebra. Consider the adjoint representation 
\begin{equation}\label{eqn:ad}
\ad_{\gfrak}\,:\, \gfrak \,\hookrightarrow\, \sfrak \ofrak(\gfrak).
\end{equation} We will drop in the subscript $\gfrak$ in the notation of adjoint representation
when there is no scope for any confusion. The Dynkin index of the embedding $\ad$ is just
the dual Coxeter number $h^{\vee}(\gfrak)$ and it is easy to check that $\ad$ satisfied
the identity in \eqref{eqn:centralcharge}. Hence the embedding $\ad$ is conformal. 

Observe that the Dynkin index of the natural embedding of $\sfrak \ofrak (\gfrak )
\,\longrightarrow\, \sfrak \lfrak(\gfrak)$ is of index two and the embedding $\gfrak
\,\longrightarrow\, \sfrak \lfrak(\gfrak)$ is not conformal since it factors through
$\sfrak \ofrak (\gfrak)$. Moreover, if ${G}$ is any connected Lie group with Lie algebra
$\gfrak$, by definition of the adjoint representation the center $Z({G})\, \subset\, G$ maps to the
identity element in $\SL_{\dim \gfrak}$. However the natural map $\Spin_{\dim \gfrak}\,
\longrightarrow\, \SL_{\dim \gfrak}$ factors through $\SO_{\dim \gfrak}$, hence it is not
necessarily true that the center of the simply connected group $\widetilde{G}$ maps to
identity element under the natural lifting $\widetilde{\Ad}\,:\,\widetilde{G}\,
\longrightarrow \, \Spin_{\dim \gfrak}$ of the adjoint representation. However the image
of $Z(\widetilde{G})$ under the map $\widetilde{\Ad}$ is contained in the following kernel $\mu_{2}$:
\begin{equation}
1\,\longrightarrow\, \mu_2\,\longrightarrow\, \Spin_{\dim \gfrak}
\,\longrightarrow\, \SO_{\dim \gfrak}\,\longrightarrow\, 1.
\end{equation}

The embedding $\ad$ in \eqref{eqn:ad} gives an embedding of the corresponding untwisted Lie algebra 
\begin{eqnarray*}
&\widehat{\ad}\,:\, \widehat{\gfrak}\,\longrightarrow\,\widehat{\sfrak \ofrak}(\gfrak)\\
&X\otimes f \,\longmapsto\, \ad(X)\otimes f\,\ \text{ and }\,\ 
c \,\longmapsto\, h^{\vee}(\gfrak)c .
\end{eqnarray*}

Let us briefly recall that an element of the center $ Z(\widetilde{G})$ of the simply connected group can be 
identified with the subgroup of diagram automorphisms of the affine Dynkin diagram of $\widehat{\mathfrak{g}}$. 
This induces a bijection of the set $P_{\ell}(\mathfrak{g})$ of level $\ell$ weights of 
$\mathfrak{g}$.

The non-trivial central element $\sigma\,\in\, \mu_2$ acts on the level one weights of the affine
Lie algebra $\widehat{\sfrak \ofrak}(\gfrak)$ by interchanging the zero-th fundamental weight
$\Lambda_0$ with the first fundamental weight $\Lambda_1$. Thus we have the following elementary fact:

\begin{proposition}\label{prop:branching}
The highest weight integrable moduli $\mathcal{H}_{\omega_0,d_{\ad}}(\gfrak)$ appears with
multiplicity one in the branching of the level one module $\mathcal{H}_{\Lambda_0,1}
(\sfrak \ofrak(\gfrak))$. Let $\Sigma$ be a non-trivial element in $Z(\widetilde{G})$, and let
$\widetilde{\Ad}(\Sigma)\,\in\,\mu_2$ be its image under the adjoint representation. Then $\mathcal{H}_{\Sigma\ast 
\omega_0,d_{\ad}}(\gfrak)$ appears with multiplicity one in the branching of 
$\mathcal{H}_{\widetilde{\Ad}(\Sigma)\bullet\Lambda_{0},1}(\sfrak\ofrak(\gfrak))$. Here $\ast$ 
(respectively, $\bullet$) denotes the action of the center on level $d_{\ad}$ (respectively, $1$) 
weights arising from diagram automorphisms.
\end{proposition}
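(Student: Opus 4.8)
The plan is to analyze the branching of the level one integrable representations of $\widehat{\sfrak\ofrak}(\gfrak)$ under the conformal embedding $\widehat{\ad}\,:\,\widehat{\gfrak}\,\hookrightarrow\,\widehat{\sfrak\ofrak}(\gfrak)$, exploiting the equality of Virasoro operators $L^{\gfrak}_{n,d_{\ad}}\,=\,L^{\sfrak\ofrak(\gfrak)}_{n,1}$ that holds because $\ad$ is conformal. First I would recall that, for a conformal embedding, a level one module of the bigger algebra decomposes as a \emph{finite} direct sum of level $d_{\ad}$ modules of $\widehat{\gfrak}$, with the multiplicity spaces being finite dimensional and the decomposition respecting the common Virasoro grading. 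For the basic module $\mathcal{H}_{\Lambda_0,1}(\sfrak\ofrak(\gfrak))$, the key point is that its conformal weight is $0$ and the degree zero subspace is the trivial $\sfrak\ofrak(\gfrak)$-module $\mathbb{C}$; since the minimal conformal weight of a $\widehat{\gfrak}$-module $\mathcal{H}_{\mu,d_{\ad}}(\gfrak)$ is $(\mu,\mu+2\rho)/2(d_{\ad}+h^{\vee})$, which vanishes only for $\mu=0$, the module $\mathcal{H}_{0,d_{\ad}}(\gfrak)$ (i.e. $\mathcal{H}_{\omega_0,d_{\ad}}$ with $\omega_0$ the trivial weight) must occur, and it occurs with multiplicity exactly one because the degree zero multiplicity space is one dimensional. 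This gives the first assertion.

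For the second assertion I would use the compatibility of the embedding with the center. As recalled in the paragraph preceding the Proposition, $Z(\widetilde G)$ is identified with the group of diagram automorphisms of the affine Dynkin diagram of $\widehat{\gfrak}$, acting on $P_{d_{\ad}}(\gfrak)$ by the $\ast$-action; similarly $\mu_2\,\subset\,\Spin_{\dim\gfrak}$ acts on the two level one weights $\{\Lambda_0,\Lambda_1\}$ of $\widehat{\sfrak\ofrak}(\gfrak)$ by the $\bullet$-action (the swap). The homomorphism $\widetilde{\Ad}\,:\,Z(\widetilde G)\,\to\,\mu_2$ intertwines these two actions, because the induced map on affine weight lattices $\widehat{\ad}^*$ carries fundamental weights to fundamental weights compatibly with the affine diagram automorphisms (this is essentially the statement that $c\,\mapsto\,h^{\vee}c$ together with the translation part of the extended affine Weyl/center action). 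Consequently, applying $\Sigma\,\in\,Z(\widetilde G)$ to the already-established branching $\mathcal{H}_{\omega_0,d_{\ad}}\,\hookrightarrow\,\mathcal{H}_{\Lambda_0,1}$ — where $\Sigma$ acts simultaneously on the $\widehat{\gfrak}$-side by $\ast$ and on the $\widehat{\sfrak\ofrak}(\gfrak)$-side by $\bullet$ through $\widetilde{\Ad}(\Sigma)$ — yields that $\mathcal{H}_{\Sigma\ast\omega_0,d_{\ad}}$ appears in $\mathcal{H}_{\widetilde{\Ad}(\Sigma)\bullet\Lambda_0,1}$, with multiplicity one since the twisting operators are isomorphisms of vector spaces preserving the relevant multiplicity spaces.

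I expect the main obstacle to be the bookkeeping in the second part: making precise that $\widetilde{\Ad}^*$ (the map on affine weights dual to $\widehat{\ad}$) genuinely intertwines the $\ast$-action of $Z(\widetilde G)$ with the $\bullet$-action of $\mu_2$. This requires knowing how the center acts as an ``affine diagram automorphism'' — concretely, how it is realized through the extended affine Weyl group via a translation by a (sum of) minuscule coweight, conjugated by a Weyl element — and checking that $\widehat{\ad}$ sends the relevant minuscule coweight data for $\gfrak$ to that for $\sfrak\ofrak(\gfrak)$ modulo the lattice, so that the swap $\Lambda_0\leftrightarrow\Lambda_1$ is recovered exactly when $\widetilde{\Ad}(\Sigma)=\sigma\neq 1$. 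Since the Proposition is labelled ``elementary'', I would keep this verification brief, either appealing to the case-by-case classification of $\widetilde{\Ad}\,:\,Z(\widetilde G)\,\to\,\mu_2$ (which is explicitly tabulated: e.g. trivial for $\gfrak$ of type $B,C,D_{\mathrm{even}},E_7,\dots$ in the appropriate cases, nontrivial for $\mathrm{SL}_2$, etc.) or to the general fact that conformal embeddings are automatically compatible with the center actions on the set of primary fields, which follows from uniqueness of the minimal-conformal-weight vectors in each isotypic component together with the action of simple-current operators.
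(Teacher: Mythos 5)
The paper does not actually argue this Proposition: it is stated as an ``elementary fact'' following the recollections about the center acting by affine diagram automorphisms, with the reader referred to Kac--Wakimoto (Eq.~4.2.13) where the complete branching of the level one $\widehat{\sfrak\ofrak}(\gfrak)$-modules under the conformal embedding $\widehat{\ad}$ is written out, and both multiplicity-one statements can simply be read off. Your treatment of the first assertion is a genuine proof and is the standard one: by conformality $L_0^{\gfrak}$ coincides with $L_0^{\sfrak\ofrak(\gfrak)}$, the branching is a finite sum of integrable level $d_{\ad}$ modules, the minimal conformal weight $(\mu,\mu+2\rho)/2(d_{\ad}+h^{\vee})$ vanishes only for $\mu=0$, and the $L_0=0$ eigenspace of $\mathcal{H}_{\Lambda_0,1}(\sfrak\ofrak(\gfrak))$ is the one-dimensional vacuum line; this pins the multiplicity of $\mathcal{H}_{\omega_0,d_{\ad}}(\gfrak)$ to exactly one. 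That part is complete and correct, and is more informative than the paper's citation.

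For the second assertion, however, what you have is the right strategy but not yet a proof, and you flag the missing step yourself: the claim that $\widetilde{\Ad}$ intertwines the $\ast$-action of $Z(\widetilde{G})$ with the $\bullet$-action of $\mu_2$ \emph{at the level of modules} is precisely the mathematical content of the statement, and neither of your two suggested routes is carried out. Invariance of branching multiplicities under the simultaneous twist does not follow from an action on weights alone; you need an operator implementing $\Sigma$. Concretely, one writes $\Sigma=\exp(2\pi i\mu^{\vee})$ for a suitable coweight $\mu^{\vee}$ and uses the associated element $\mu^{\vee}(\xi)$ of the loop group (equivalently, the corresponding element of the extended affine Weyl group): conjugation by it twists the $\widehat{\gfrak}$-action by the diagram automorphism realizing $\Sigma\ast$, while its image under $\Ad$ acts on the level one $\widehat{\sfrak\ofrak}(\gfrak)$-modules and preserves or interchanges $\mathcal{H}_{\Lambda_0,1}$ and $\mathcal{H}_{\Lambda_1,1}$ according to whether the loop lifts to $\Spin_{\dim\gfrak}(\mathbb{C}((\xi)))$, which is detected exactly by $\widetilde{\Ad}(\Sigma)\in\mu_2$. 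Only with such an operator in hand does ``apply $\Sigma$ to the established embedding'' yield multiplicity one of $\mathcal{H}_{\Sigma\ast\omega_0,d_{\ad}}$ in $\mathcal{H}_{\widetilde{\Ad}(\Sigma)\bullet\Lambda_0,1}$ (including the case $\widetilde{\Ad}(\Sigma)=1$, which does occur, e.g.\ for $\gfrak$ of type $A_2$ inside $\sfrak\ofrak_8$). Your appeal to ``uniqueness of minimal-conformal-weight vectors plus simple currents'' gestures at this but is not an argument as stated; either carry out the loop-group/extended-affine-Weyl verification (or the case-by-case check of $\widetilde{\Ad}$ on centers), or do as the paper does and quote the explicit Kac--Wakimoto branching rule.
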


We refer the reader to Kac--Wakimoto \cite[p.~214, Eq.~4.2.13]{KacWakimoto:88} for a complete description of 
the branching rule for this conformal embedding.

\section{Conformal blocks and adjoint representation} In this section, we first recall the notion of conformal blocks and then analyze the functoriality of conformal blocks under the adjoint representation. 
We use the identification between conformal blocks and the space of non-abelian $G$ theta functions, i.e global section of natural line bundles on $\mathcal{M}_G$ to study the image of $\mathcal{M}_G$ under the adjoint representation $\operatorname{Ad}: \mathcal{M}_G \longrightarrow \mathcal{M}_{\SO(\dim \mathfrak{g})}$. 
\subsection{Conformal blocks}

Let $\pi\,:\, \mathscr{C}\,\longrightarrow\, B$ be a family of stable $n$-pointed curves satisfying
the following conditions:
\begin{itemize}
\item There are disjoint sections $s_i\,:\, B \,\longrightarrow\, \mathscr{C}$, $1\,\leq\, i\,
\leq\, n$, of the family $\pi$, marking smooth points in the fiber $\mathscr{C}_b$.

\item $\mathscr{C} \backslash \sqcup_{i=1}^n s_i(B)$ is affine.

\item There are isomorphisms $\widehat{\mathcal{O}}_{\mathscr{C}/s_i(B)}
\,\cong\, \mathcal{O}_B[[\xi_i]]$, where $\xi_1,\, \cdots\, \xi_n$ are formal parameters.
\end{itemize}
We will denote the above family along with a choice of formal parameters by $\mathfrak{X}$. Let 
$S\,:=\,\sqcup_{j=1}^n s_i(B)$.

For any choice of $n$-tuple $\vec{\lambda}\,=\,(\lambda_1,\,\cdots,\, \lambda_n)$ of level $\ell$ weights of $\gfrak$, one can associate
the quasi-coherent sheaf of covacua 
$$
\mathcal{V}_{\vec{\lambda}}(\mathfrak{X},\gfrak,\ell)\,\,:=\,\,
\frac{ \left(\mathcal{H}_{\vec{\lambda}}\otimes_{\mathbb{C}}\mathcal{O}_B\right)}{\gfrak \otimes
\pi_*(\mathcal{O}_\mathscr{C}(*S))\cdot \left(\mathcal{H}_{\vec{\lambda}}\otimes_{\mathbb{C}}\mathcal{O}_B\right)},
$$
where $\mathcal{H}_{\vec{\lambda}}\,:=\,\mathcal{H}_{\lambda_1,\ell}(\gfrak)\otimes \dots \otimes \mathcal{H}_{\lambda_n,\ell}(\gfrak)$.
Here the action of $\gfrak \otimes \pi_*(\mathcal{O}_\mathscr{C}(*S))$ on $\mathcal{H}_{\vec{\lambda}}\otimes \mathcal{O}_B$ is given by 
Laurent expansion using the formal parameters $\xi_j$. This sheaf of covacua was first constructed in the
work of Tsuchiya--Ueno--Yamada \cite{TUY:89}. 
 The dual of the sheaf of covacua is known as the sheaf of conformal blocks and
it is denoted by $\mathcal{V}^{\dagger}_{\vec{\lambda}}(\mathfrak{X},\gfrak,\ell)$.

We will recall some of it basic properties; the reader is referred to \cite{TUY:89} for a proof.

\begin{theorem}
The sheaf of covacua enjoys the following properties:
\begin{itemize}
\item (Local freeness): The sheaf $\mathcal{V}_{\vec{\lambda}}(\mathfrak{X},\,\gfrak,\,\ell)$ is locally free of finite rank.
Its rank is given by the Verlinde formula.

\item If $\pi\,:\, \mathscr{C}\,\longrightarrow\, B$ is a family of smooth curves, then the sheaf 
$\mathcal{V}_{\vec{\lambda}}(\mathfrak{X},\,\gfrak,\,\ell)$ carries a flat projective connection constructed out of the Segal--Sugawara 
action of $L_{n,\ell}^{\gfrak}$ on integrable modules.

\item (Propagation of vacua): Let $s$ be a new section for a family of nodal curve $\pi\,:\, \mathscr{C}\,\longrightarrow\, S$
disjoint from $s_1,\,\cdots,\, s_n$, and let $\mathfrak{X}'$ denote the new data. Then there is an isomorphism 
$$
\mathcal{V}_{\vec{\lambda}}(\mathfrak{X},\,\gfrak,\,\ell)\,\,\cong\,\, \mathcal{V}_{\vec{\lambda},\Lambda_0}(\mathfrak{X}',\,\gfrak,\,\ell),
$$
where $\Lambda_0$ is the vacuum representation at level $\ell$. 

\item(Factorization theorem): Let $\pi\,:\,\mathcal{C}\,\longrightarrow\, S$ be a family of
nodal curves and $$s_1,\,\dots,\,s_n,\, q_1,\,q_2$$
are disjoint sections of it marking smooth points, and let $\mathcal{D}\,\longrightarrow\, S$ be the family obtained by gluing
$\mathcal{C}$ along $q_1$ and $q_2$. Then there is an isomorphism
$$
\bigoplus \iota_{\mu}\,:\,\mathcal{V}_{\vec{\lambda}}(\mathfrak{X},\,\gfrak,\,\ell)\,\cong\,
\bigoplus_{\mu\in P_{\ell}(\gfrak)}\mathcal{V}_{\vec{\lambda},\mu,\mu^{\dagger}}(\mathfrak{X}',\,\gfrak,\,\ell),
$$
where $\mathfrak{X}$ (respectively, $\mathfrak{X}'$) is associated to the family $\mathcal{D}\,\longrightarrow\, S$
(respectively, $\mathcal{C}\,\longrightarrow\, S$).
\end{itemize}
\end{theorem}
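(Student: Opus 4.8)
My plan is to recall the architecture of the Tsuchiya–Ueno–Yamada argument and to isolate the one step that carries the real weight; the rest I would treat as bookkeeping. I would begin with coherence. The first move is to show that $\mathcal{V}_{\vec{\lambda}}(\mathfrak{X},\gfrak,\ell)$ is generated over $\mathcal{O}_B$ by the image of the degree-zero part $V_{\vec{\lambda}}\otimes\mathcal{O}_B$: a current $X\otimes\xi_i^{-m}$ with $m>0$ acting in the $i$-th slot can be replaced, modulo the gauge submodule $\gfrak\otimes\pi_*(\mathcal{O}_{\mathscr{C}}(*S))\cdot(\mathcal{H}_{\vec{\lambda}}\otimes\mathcal{O}_B)$ that we quotient by, by $X$ tensored with a rational function on the fibre which is regular off $S$ and has a single pole, of order $m$, along $s_i(B)$; such functions exist because $\mathscr{C}\setminus S$ is affine and Riemann--Roch allows one to prescribe principal parts once the pole order is large enough. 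The remaining terms of the Laurent expansions of that function (at $s_i$ and at the other sections) strictly lower the total grading, so a downward induction on degree finishes the argument. Hence $\mathcal{V}_{\vec{\lambda}}$ is a coherent $\mathcal{O}_B$-quotient of the locally free sheaf $V_{\vec{\lambda}}\otimes\mathcal{O}_B$.

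Next, over the open locus where $\pi$ is smooth I would construct the projective connection in the style of Friedan--Shenker and TUY: lift a local vector field on $B$ to a vector field on $\mathscr{C}$ that is regular away from $S$ — possible by the affineness hypothesis — and let it act on $\mathcal{H}_{\vec{\lambda}}\otimes\mathcal{O}_B$ through the $\mathcal{O}_B$-derivative together with the Segal--Sugawara operators $L^{\gfrak}_{n,\ell}$ inserted at the marked points via the formal parameters $\xi_i$. One checks that this operator preserves the gauge submodule, so it descends to $\mathcal{V}_{\vec{\lambda}}$; the ambiguity in the choice of lift is a vertical vector field, whose associated Sugawara operator acts on covacua by a scalar, and this is exactly why the descended connection is only projective (equivalently, a genuine connection on a suitable twist by a determinant-of-cohomology line bundle). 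Flatness is the standard Virasoro/operator-product computation. Since a coherent sheaf admitting an integrable connection on a smooth base is locally free, $\mathcal{V}_{\vec{\lambda}}$ is a vector bundle over the smooth locus of $\pi$.

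Propagation of vacua is the assertion that $v\longmapsto v\otimes|0\rangle$ induces an isomorphism on covacua: surjectivity again follows by using the gauge action to push any state occupying the added vacuum slot down to $|0\rangle$, and the inverse is manufactured from the action of the affine algebra attached to the remaining points. The factorization isomorphism for a nodal degeneration $\mathcal{D}$ (obtained by gluing $q_1,q_2$ on $\mathcal{C}$) I would obtain by analyzing the coinvariants directly: near the node the two branches are identified, functions on the normalization with poles along $S\cup\{q_1,q_2\}$ implement the current-algebra action, and the canonical invariant pairing $\mathcal{H}_{\mu}\otimes\mathcal{H}_{\mu^{\dagger}}\longrightarrow\mathbb{C}$, summed over $\mu\in P_{\ell}(\gfrak)$, selects the diagonal contribution and produces the direct sum. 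With factorization in hand, local freeness along the boundary follows by induction on the number of nodes, and semicontinuity of fibre dimension together with constancy of the Verlinde number — proven by degenerating to a pair-of-pants decomposition, where everything reduces to the three-pointed $\mathbb{P}^1$ case and the fusion rules — upgrades fibrewise-constant rank to local freeness on all of $B$; the same maximal-degeneration argument simultaneously evaluates the rank as the Verlinde number.

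The genuinely hard step, and the one I would expect to consume most of the work, is factorization: controlling the behaviour of the sheaf of coinvariants under a nodal degeneration, pinning down the gluing map precisely, and proving that it is an isomorphism — in particular that no covacua are lost or created in the limit, which is what ties local freeness across the boundary divisor to the Verlinde count. By contrast, the coherence reduction is essentially Riemann--Roch bookkeeping and the construction of the connection is a Virasoro computation; factorization is where the careful local analysis at the node is unavoidable.
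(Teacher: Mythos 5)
The paper itself does not prove this theorem; it is recalled from Tsuchiya--Ueno--Yamada with the reference \cite{TUY:89}, so the only comparison available is with that cited argument, whose overall architecture (coherence via the gauge action, Sugawara projective connection, propagation of vacua, factorization, degeneration to get the Verlinde rank) your outline does follow. However, your first step contains a genuine error: you claim that the gauge action pushes everything down to degree zero, so that $\mathcal{V}_{\vec{\lambda}}(\mathfrak{X},\gfrak,\ell)$ is a quotient of $V_{\vec{\lambda}}\otimes\mathcal{O}_B$. This is false in positive genus. For a single marked point carrying the vacuum weight on a curve of genus $1$ with $\gfrak=\mathfrak{sl}_2$ at level $1$, the space of covacua has dimension $2$ (in genus one it is $|P_{\ell}(\gfrak)|$), while $\dim V_{\omega_0}=1$; more structurally, the bound $\operatorname{rk}\mathcal{V}_{\vec{\lambda}}\le \dim V_{\vec{\lambda}}$ that your claim would force contradicts the Verlinde formula asserted in the very theorem you are proving. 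The obstruction is exactly the caveat you mention in passing: Riemann--Roch produces a function on the affine complement with a pole of prescribed order $m$ at $s_i$ and regular at the other sections only for $m$ large (outside the gap sequence), so the low negative modes $X(-m)$ cannot be eliminated and the downward induction stops at a positive degree. What \cite{TUY:89} actually prove at this step is that the covacua are generated by the image of a finite-degree truncation $\bigoplus_{d\le D}\mathcal{H}_{\vec{\lambda}}(d)\otimes\mathcal{O}_B$; that suffices for coherence, but not for your stronger statement, and your later steps should be run from the corrected version.

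A second, smaller gap is in how you pass local freeness and the rank computation across the boundary. Upper semicontinuity gives only the inequality $\dim$ of the nodal fibre $\ge$ the nearby rank, and the factorization theorem computes the nodal fibre; by itself this does not yield ``fibrewise-constant rank,'' and your appeal to constancy of the Verlinde number at smooth fibres is close to circular, since that constancy is what the degeneration argument is meant to establish. The cited proof closes this loop with a sheaf-level statement over $\mathbb{C}[[t]]$ (the maps $\iota_{\mu}$ are defined on the family, together with a logarithmic extension of the projective connection across the boundary), which is what forces the rank at smooth fibres to agree with the factorized count at the nodal fibre. With the truncation replacing degree-zero generation and with this boundary argument made explicit, your outline matches the strategy of \cite{TUY:89}, including your correct identification of factorization as the step carrying the real weight.
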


\subsection{Functoriality for conformal blocks}

Let $\varphi\,:\,\gfrak_1 \,\longrightarrow\, \gfrak$ be a homomorphism of simple Lie algebras; this induces a homomorphism
$\varphi\,:\,\widehat{\gfrak}_1 \,\longrightarrow \,\widehat{\gfrak}$ between the affine Lie algebras.
Let $\vec{\Lambda}\,:=\,(\Lambda_1,\,\cdots, \,\Lambda_n)$ be a choice of level one weights of $\widehat{\gfrak}$, and let
$\vec{\lambda}\,:=\,(\lambda_1,\,\cdots,\, \lambda_n)$ be a choice of level $d_{\varphi}$ weights of $\gfrak_1$ such that 
$$\mathcal{H}_{\lambda_i, d_{\varphi}}(\gfrak_1)\,\,\hookrightarrow\,\,\, \mathcal{H}_{\Lambda_i,1}(\gfrak).$$
By functoriality, we get a homomorphism of the corresponding conformal blocks 
\begin{equation}\label{eqn:rlmap}
\varphi\,:\,\, \mathcal{V}_{\vec{\lambda}}(\mathfrak{X},\gfrak_1,d_{\varphi})\,\longrightarrow\,
\mathcal{V}_{\vec{\Lambda}}(\mathfrak{X},\gfrak,1),
\end{equation}where $\mathfrak{X}$ is the data associated to family of $n$-pointed nodal curves with a choice of $n$-formal parameters. Moreover we have the following: 

\begin{proposition}(\cite[Proposition 5.8]{Be1})\label{prop:flatness}
Assume that the embedding $\varphi$ is conformal (Definition \ref{def:conformal}), and the family $\mathfrak{X}$ is smooth.
Then the functorial map $\varphi\,:\, \mathcal{V}_{\vec{\lambda}}(\mathfrak{X},\gfrak_1,d_{\varphi})\,\longrightarrow\,
\mathcal{V}_{\vec{\Lambda}}(\mathfrak{X},\gfrak,1)$ in \eqref{eqn:rlmap} is flat with respect to the projective
connections. In particular, the rank of $\varphi$ is constant. 
\end{proposition}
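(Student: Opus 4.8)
The plan is to prove Proposition~\ref{prop:flatness} by reducing the flatness statement to a purely local computation with the Sugawara operators, exploiting the defining identity of a conformal embedding. First I would recall that the projective connection on a sheaf of conformal blocks $\mathcal{V}_{\vec{\mu}}(\mathfrak{X},\,\gfrak,\,\ell)$ over the base of a smooth family is the Tsuchiya--Ueno--Yamada connection: differentiating a section against a vector field $\vartheta$ on $B$ is implemented, after lifting $\vartheta$ to a vector field on the total space $\mathscr{C}$ that is vertical near the sections and has a prescribed form in the formal coordinates $\xi_j$, by the action of $\sum_j \vartheta_j \cdot L^{\gfrak}_{\bullet,\ell}$ at the $j$-th point (plus the natural action of the vertical part of the lift). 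The key input, stated in the excerpt just after Definition~\ref{def:conformal}, is the equality of Virasoro operators $L^{\gfrak_1}_{n,d_{\varphi}} \,=\, L^{\gfrak}_{n,1}$ as operators on any level-one integrable $\widehat{\gfrak}$-module, under the restriction along $\varphi$; this is precisely what a conformal embedding buys us.

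The main steps are then as follows. \emph{Step 1:} Set up the TUY connection explicitly for both $\mathcal{V}_{\vec{\lambda}}(\mathfrak{X},\gfrak_1,d_{\varphi})$ and $\mathcal{V}_{\vec{\Lambda}}(\mathfrak{X},\gfrak,1)$, using the \emph{same} choice of lift of a local vector field $\vartheta$ on $B$ to $\mathscr{C}$ and the same formal parameters $\xi_j$; this is legitimate because the connection on the sheaf of covacua is defined at the level of the ambient sheaf $\mathcal{H}_{\vec{\mu}}\otimes\mathcal{O}_B$ before quotienting by the $\gfrak$- (resp. $\gfrak_1$-)action, and the map $\varphi$ in \eqref{eqn:rlmap} is induced by the inclusion $\mathcal{H}_{\vec\lambda}(\gfrak_1)\hookrightarrow \mathcal{H}_{\vec\Lambda}(\gfrak)$ of ambient spaces. \emph{Step 2:} Check that this inclusion intertwines the two ``pre-connection'' operators. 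The vertical part of the lift acts through $X(m)$-type operators with $X\in\gfrak_1\subset\gfrak$, which visibly commute with the inclusion; the ``horizontal'' part acts through the Sugawara operators $L^{\gfrak_1}_{n,d_\varphi}$ on the source and $L^{\gfrak}_{n,1}$ on the target, and these agree on the nose by the conformal-embedding identity recalled above. Hence the ambient covariant-derivative operators are compatible with $\varphi$. \emph{Step 3:} Pass to the quotients: since $\varphi$ sends the relation subspace $\gfrak_1\otimes\pi_*(\mathcal{O}_{\mathscr{C}}(*S))\cdot(\cdots)$ into $\gfrak\otimes\pi_*(\mathcal{O}_{\mathscr{C}}(*S))\cdot(\cdots)$ (this is how the map on covacua is defined in the first place), the compatibility descends, giving flatness of $\varphi$ with respect to the (projective) connections; dualizing yields the corresponding statement for conformal blocks. \emph{Step 4:} Deduce that $\mathrm{rank}(\varphi)$ is locally constant: the image of a flat map of vector bundles with (projective) flat connections is a subsheaf which is preserved by the connection up to the scalar ambiguity, hence is a subbundle of locally constant rank; equivalently, the kernel is flat, so has locally constant rank, and one concludes by additivity of ranks over the connected base. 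Since a smooth family of pointed curves has connected base (or one argues component by component), the rank is in fact constant.

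The subtlety that needs care — and the step I expect to be the main obstacle — is the bookkeeping around the \emph{projective} nature of the connections and the normalization of the central charge. The operators $L^{\gfrak_1}_{n,d_\varphi}$ and $L^{\gfrak}_{n,1}$ coincide \emph{as operators}, but the two sheaves of conformal blocks carry connections that are only projectively flat, with scalar curvature terms governed by the respective central charges $c(\gfrak_1,d_\varphi)$ and $c(\gfrak,1)$. These central charges are equal precisely because $\varphi$ is conformal (that is the content of \eqref{eqn:centralcharge}), so the scalar ambiguities match up and ``flat with respect to the projective connections'' makes sense and holds; but one has to state this identification cleanly rather than sweep it under the rug. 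A secondary technical point is verifying that one may genuinely use a single common lift of $\vartheta$ and common formal coordinates for both families — this is standard in the TUY formalism but should be invoked explicitly, e.g. by citing that the connection is independent of these auxiliary choices up to the action of the relation subspace. Once these normalization issues are pinned down, the argument is the short diagram-chase outlined in Steps 1--4, and in fact Beauville's \cite[Proposition~5.8]{Be1} can be cited directly; I would present the above as the proof sketch and defer to that reference for the remaining details.
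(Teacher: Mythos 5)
Your proposal is correct and is essentially the argument behind the cited result: the paper itself offers no proof of Proposition \ref{prop:flatness}, quoting it from \cite[Proposition 5.8]{Be1}, and that reference proves it exactly as you outline, via the operator identity $L^{\gfrak_1}_{n,d_{\varphi}}=L^{\gfrak}_{n,1}$ for a conformal embedding, compatibility of the Sugawara/TUY connection with the inclusion of ambient modules before passing to covacua, matching of the central charges (which is precisely \eqref{eqn:centralcharge}) to handle the projective ambiguity, and local constancy of the rank of a flat morphism. One small correction: the reference \cite{Be1} is Belkale's paper on strange duality and the Hitchin/WZW connection, not Beauville.
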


\subsection{The case of adjoint representations}

We now restrict ourselves to the special case of the adjoint representation. Using the branching rule
in Proposition \ref{prop:branching} together with the functoriality described above, we get a linear map
\begin{equation}\label{eqn:genad}
\ad\,: \,\mathcal{V}_{\Sigma\ast \omega_0}(C,\,\gfrak,\,d_{\ad})\,\longrightarrow\,
\mathcal{V}_{\widetilde{\Ad}(\Sigma)\bullet\Lambda_{0}}(C,\,\sfrak \ofrak(\gfrak),\,1), 
\end{equation}where $C$ is a smooth one-pointed curve of genus $g$. The following question is natural:

\begin{question}
Is the map $\ad$ in \eqref{eqn:genad} non-zero?
\end{question}

\subsubsection{The case where $\Sigma$ is trivial}
Henceforth, we will restrict to the case where $\Sigma$ is trivial. Therefore,
\eqref{eqn:genad} becomes
\begin{equation}\label{eqn:genad2}
\ad\,:\,\mathcal{V}_{\omega_0}(C,\,\gfrak,\, d_{\ad})\,\longrightarrow\, \mathcal{V}_{\Lambda_0}(C,\, \sfrak \ofrak(\gfrak),\,1).
\end{equation}

\begin{theorem}\label{thm:nonzero}
The map $\ad$ in \eqref{eqn:genad2} is non-zero for any smooth curve $C$ of genus $g$. 
\end{theorem}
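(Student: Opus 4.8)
The plan is to reduce the statement for a genus-$g$ curve to the genus-zero (or low-genus) case by means of the factorization theorem, and there to exhibit an explicit nonzero element in the image. First I would use the factorization isomorphism for the sheaf of covacua repeatedly to degenerate $C$ to a maximally degenerate stable curve, i.e.\ a chain or tree of $\mathbb{P}^1$'s with nodes. Since the functorial map $\ad$ in \eqref{eqn:genad2} is compatible with factorization on both sides — the adjoint embedding $\gfrak\hookrightarrow\sfrak\ofrak(\gfrak)$ induces, for each $\mu\in P_{d_{\ad}}(\gfrak)$, a map from the $\mu$-summand of the $\gfrak$-side to the summand of the $\sfrak\ofrak(\gfrak)$-side labelled by whichever level-one weight $\Lambda$ satisfies $\mathcal{H}_{\mu,d_{\ad}}(\gfrak)\hookrightarrow\mathcal{H}_{\Lambda,1}(\sfrak\ofrak(\gfrak))$, as controlled by Proposition \ref{prop:branching} — the map $\ad$ on the original curve is nonzero provided it is nonzero on at least one summand appearing in the normalization. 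Concretely, by propagation of vacua I can add the required marked points with vacuum weight $\Lambda_0$, and the question becomes whether the three-pointed genus-zero blocks $\mathcal{V}_{\omega_0,\mu,\mu^{\dagger}}(\mathbb{P}^1,\gfrak,d_{\ad})\to\mathcal{V}_{\Lambda_0,\ast,\ast}(\mathbb{P}^1,\sfrak\ofrak(\gfrak),1)$ can be arranged to be nonzero along a compatible family of gluings.

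The heart of the matter is therefore the genus-zero base case. For three points on $\mathbb{P}^1$ with one vacuum insertion, $\mathcal{V}_{\omega_0,\mu,\mu^{\dagger}}$ is (by propagation of vacua and the standard identification of two-pointed $\mathbb{P}^1$ blocks) nothing but the space of $\gfrak$-coinvariants $\big(V_{\mu}\otimes V_{\mu^{\dagger}}\big)_{\gfrak}$, which is one-dimensional, and similarly on the $\sfrak\ofrak(\gfrak)$ side one gets a one-dimensional space of coinvariants for the relevant level-one weights. The functorial map is then just the map induced on coinvariants by the inclusion of $\gfrak$-representations $V_\mu\otimes V_{\mu^\dagger}\hookrightarrow$ (the degree-zero part of) $\mathcal{H}_{\Lambda,1}(\sfrak\ofrak(\gfrak))\otimes\mathcal{H}_{\Lambda^\dagger,1}(\sfrak\ofrak(\gfrak))$, and nonvanishing amounts to the statement that the canonical invariant pairing on $V_\mu$ does not die under this inclusion — equivalently, that the branching $\mathcal{H}_{\omega_0,d_{\ad}}(\gfrak)\hookrightarrow\mathcal{H}_{\Lambda_0,1}(\sfrak\ofrak(\gfrak))$, which has multiplicity one by Proposition \ref{prop:branching}, restricts nontrivially on the relevant graded pieces. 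I would extract this from the explicit description of the branching rule in Kac--Wakimoto \cite[p.~214, Eq.~4.2.13]{KacWakimoto:88}: the vacuum-to-vacuum component is visibly nonzero because the highest weight vector of $\mathcal{H}_{\omega_0,d_{\ad}}(\gfrak)$ maps to (a nonzero multiple of) the highest weight vector of $\mathcal{H}_{\Lambda_0,1}(\sfrak\ofrak(\gfrak))$ under $\widehat{\ad}$, and the Sugawara/Virasoro compatibility $L^{\gfrak}_{n,d_{\ad}}=L^{\sfrak\ofrak(\gfrak)}_{n,1}$ guarantees that this nonzero vector is not subsequently annihilated in the coinvariant quotient.

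I expect the main obstacle to be bookkeeping at the nodes during degeneration: after factorization each node contributes a summation over intermediate weights $\mu$, and one must be sure that there exists a single consistent choice of $\mu$ at every node for which (a) the multiplicity space on the $\gfrak$-side is nonzero, (b) the matching level-one weight on the $\sfrak\ofrak(\gfrak)$-side is the one dictated by $\widetilde{\Ad}$, and (c) the induced map on that summand is nonzero by the genus-zero computation above. The cleanest route is to choose at every node the vacuum weight $\mu=\Lambda_0$ on the $\gfrak$-side wherever propagation of vacua permits, which forces $\omega_0$-type data throughout and reduces everything to iterated vacuum-to-vacuum maps; since the composite of nonzero maps between one-dimensional spaces is nonzero, the argument closes. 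An alternative, avoiding explicit degeneration, is to invoke Proposition \ref{prop:flatness}: the conformal embedding $\ad$ makes \eqref{eqn:genad2} flat for the projective connections, so its rank is locally constant in moduli; hence it suffices to check nonvanishing at one convenient (possibly nodal) point of $\overline{M}_{g,1}$, again reducing to the genus-zero computation. I would present the flatness argument as the structural backbone and use the explicit branching rule only for the final genus-zero nonvanishing input.
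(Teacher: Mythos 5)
Your overall architecture matches the paper's: use Proposition \ref{prop:flatness} (conformality of $\ad$, hence rank constancy) to reduce to a single curve, degenerate, apply factorization and propagation of vacua, and feed in a genus-zero computation where both blocks are one-dimensional and the map is visibly nonzero. However, there is a genuine gap at the step you describe as ``the functorial map $\ad$ is compatible with factorization on both sides'' with each $\mu$-summand of the $\gfrak$-side mapping to ``whichever'' level-one summand contains it. This block structure is neither correct as stated nor automatic: the branching of a level-one $\widehat{\sfrak\ofrak}(\gfrak)$-module under the conformal embedding generally contains several level-$d_{\ad}$ weights $\mu$ (Proposition \ref{prop:branching} only pins down the multiplicity of $\omega_0$ in $\mathcal{H}_{\Lambda_0,1}$), so the degenerate map, written in the factorization decompositions of source and target, is a full matrix $\bigl(\alpha_{\Lambda,\mu}\bigr)$ whose entries are \emph{not} simply induced inclusions of coinvariants, and one cannot ``choose the vacuum channel at every node.'' What the induction actually requires is (i) the identification, via propagation of vacua, of the single entry $\alpha_{\Lambda_0,\omega_0}$ with the lower-genus map $\ad$, and (ii) control of the remaining entries from the $\Lambda_0$-summand, namely $\alpha_{\Lambda_0,\mu}=0$ for $\mu\neq\omega_0$. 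In the paper this is precisely the imported technical input \cite[Proposition 4.4]{BoysalPauly:10} (see also \cite{Mukhopadhyay:12}, \cite{Mukhopadhyay:15}), which analyzes how the sewing/gluing tensor on the $\sfrak\ofrak(\gfrak)$ side decomposes under the conformal embedding; your proposal supplies no substitute for it, and the appeal to ``the highest weight vector maps to the highest weight vector'' plus Sugawara compatibility does not address the behavior at the node.

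Two smaller points. First, Proposition \ref{prop:flatness} is stated only for smooth families, so rank constancy cannot by itself transport nonvanishing from a nodal point of $\overline{M}_{g,1}$ to smooth curves; the paper degenerates over $\mathbb{C}[[t]]$, proves nonvanishing at the nodal special fiber, and then uses \emph{semicontinuity} to reach nearby smooth fibers before invoking flatness on the smooth locus --- your ``alternative'' route conflates these two steps. Second, the paper degenerates one node at a time (induction on genus, normalization of genus $g-1$) rather than passing to a maximally degenerate curve; this is what keeps the bookkeeping to the single pair $(\Lambda_0,\omega_0)$ and makes the cited vanishing result directly applicable.
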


\begin{proof}
We first note that it is enough to find a curve such that map is non-zero for that curve. Indeed, this follows 
from the fact that the embedding $\ad$ is conformal and the induced map (also denoted by $\ad$) is flat with 
respect to the projective connections and hence the induced map preserves rank (see Proposition \ref{prop:flatness}).

We proceed by induction on the genus of the curve. In the genus zero case 
$\mathcal{V}_{\omega_0}(\mathbb{P}^1,\, \mathfrak{g},\, d_{\ad})\,\cong\, (V_{\omega_0})^{\gfrak}$, where 
$V_{\omega_0}\,\cong\, \mathbb{C}$ is the trivial representation of $\gfrak$. Similarly 
$\mathcal{V}_{\Lambda_0}(\mathbb{P}^1,\, \sfrak \ofrak (\gfrak),\,1)\,\cong \,\mathbb{C}$. Since the trivial 
$\sfrak \ofrak (\gfrak)$-module restricts to the trivial $\gfrak$ module, the result follows in genus zero by 
taking invariants of the trivial representation.
	 
Now consider a family $\mathcal{X}$ over $\mathbb{C}[[t]]$ such that for $q\,\neq\, 0$, the fibers are all one pointed smooth curve of genus 
$g$ and $X_0$ is a nodal curve with exactly one node. By functoriality, we get a map of sheaves over $\mathbb{C}[[t]]$:
$$ \afrak\dfrak\,\,:\,\,\mathcal{V}^{\dagger}_{\Lambda_0}(\mathcal{X},\,\sfrak \ofrak(\gfrak),\,1)\,\longrightarrow\,
\mathcal{V}^{\dagger}_{\omega_0}(\mathcal{X},\,\gfrak,\,d_{\ad}).$$ We have the following diagram:
\begin{equation}\label{eqn:facto}
\begin{tikzcd}
\bigoplus_{\Lambda \in P_1(\gfrak)}\mathcal{V}^{\dagger}_{\Lambda_0,\Lambda, \Lambda^{\dagger}}(\widetilde{X},\,\sfrak \ofrak (\gfrak),
\,1)\ar[r,"\iota_{\Lambda}"]& \mathcal{V}^{\dagger}_{\Lambda_0}({X}_0,\,\sfrak \ofrak (\gfrak),\,1)\ar[d,"\afrak \dfrak_{t=0}"]&\\
&\mathcal{V}^{\dagger}_{\omega_0}({X}_0,\,\gfrak,\,d_{\ad})\ar[r,"\iota_{\mu}^{-1}"]& \bigoplus_{\Lambda \in P_{d_{\ad}}(\gfrak)}
\mathcal{V}^{\dagger}_{\omega_0,\mu, \mu^{\dagger}}(\widetilde{X},\,\gfrak,\,d_{\ad}).
	 \end{tikzcd}
	 \end{equation}
Let $\alpha_{\Lambda,\mu}$ be the following map obtained by restricting the composition of maps in \eqref{eqn:facto}: 
\begin{equation}\label{eqn:factorres}
\alpha_{\Lambda,\mu}\,: \,\,\mathcal{V}^{\dagger}_{\Lambda_0, \Lambda, \Lambda}(\widetilde{X},\, \sfrak \ofrak (\gfrak),\,1)
\,\longrightarrow\, \mathcal{V}^{\dagger}_{\omega_0,\mu, \mu^{\dagger}}(\widetilde{X},\,\gfrak,\,d_{\ad}).
\end{equation}
Observe that the genus of $\widetilde{X}$ is $g-1$, and taking $\Lambda\,=\,\Lambda_0$ and $\mu\,=\,\omega_0$ it follows from
\cite[Proposition 4.4]{BoysalPauly:10} (see also \cite[Proposition 4.3]{Mukhopadhyay:12} and \cite{Mukhopadhyay:15}) that the
above map $\alpha_{\Lambda_0, \mu}$ is zero if $\mu \,\neq\, \omega_0$. Now we have the following commutative diagram for $\widetilde{X}$:
\begin{equation}\label{el}
\begin{tikzcd}[column sep=large]
\mathcal{V}_{\Lambda_0, \Lambda_0, \Lambda_0}^{\dagger}(\widetilde{X},\, \sfrak \ofrak (\gfrak),\,1) 
\ar[r, "\alpha_{\Lambda_0,\omega_0}"] \ar[d, "\cong"]& \mathcal{V}^{\dagger}_{\omega_0,\omega_0, \omega_0}(\widetilde{X},\,\gfrak,\,d_{\ad}) \ar[d, "\cong"]\\
\mathcal{V}_{\Lambda_0}^{\dagger}(\widetilde{X},\, \sfrak \ofrak (\gfrak),\,1) \ar[r, "\ad"]& \mathcal{V}^{\dagger}_{\omega_0}(\widetilde{X},
\,\gfrak,\,d_{\ad})	 
	 \end{tikzcd}
	 \end{equation}
	 
The vertical isomorphisms in \eqref{el} are given by the propagation of vacua. By the induction hypothesis the 
map $\alpha_{\Lambda_0, \omega_0}$ is non-zero. This applied to \eqref{eqn:facto} yields that 
$$\afrak\dfrak_{t=0}\,\,:\,\,\mathcal{V}^{\dagger}_{\Lambda_0}({X}_0,\,\sfrak \ofrak (\gfrak),\,1) 
\,\,\longrightarrow\,\, \mathcal{V}^{\dagger}_{\omega_0}({X}_0,\,\gfrak,\,d_{\ad})$$ is non-zero. Consequently, 
the proof is completed by using semi-continuity.
\end{proof}

\section{Uniformization and Conformal blocks}

In this section, we recall the uniformization theorems connecting conformal blocks with global 
sections of line bundles on moduli stacks. We then have a reinterpretation of Theorem \ref{thm:nonzero}.

\subsection{Non-abelian theta functions and functoriality}

Let ${G}$ be a connected semi-simple group such that the Lie algebra $\mathfrak g$ is
simple. Let
$\mathcal{M}_{{G}}$ be the moduli stack of 
principal ${G}$--bundles on a smooth projective curve $C$. The connected components of the moduli stack 
$\mathcal{M}_{{G}}$ are parametrized by the fundamental group $\pi_1(G)$. We denote by 
$\mathcal{M}^{\delta}_{{G}}$ the component of $\mathcal{M}_{{G}}$ corresponding to $\delta
\,\in\, \pi_1(G)$. We 
will mostly be interested the {\em neutral component} $\mathcal{M}^0_{G}$ of the moduli stack 
corresponding to the trivial element in the fundamental group (it is
the connected component of $\mathcal{M}_{{G}}$ containing the
trivial principal ${G}$--bundle). Denote by $\widetilde{G}$ the simply 
connected cover of $G$. The natural map $ \varpi_1:\ \widetilde{G}\,\longrightarrow\, G$ induces a map
\begin{equation}
\pi_G\,:\, \mathcal{M}_{\widetilde{G}}\,\longrightarrow\, \mathcal{M}^0_{G}
\end{equation}
The kernel of the above map $\varpi_1$ will be denoted by $A$ which is just the fundamental group of $G$. It is known that $A$ is a
product of finite cyclic groups; denote $J_A\,:=\, H^1(C,\, A)$. Then by Beauville-Laszlo-Sorger \cite[Proposition 1.5]{BLS:98}, we have a
long exact sequence 
$$
0\,\longrightarrow\, J_A \,\longrightarrow\, \operatorname{Pic}(\mathcal{M}^{\delta}_{G})
\,\longrightarrow\, \operatorname{Pic}(\mathcal{M}^{\delta}_{\widetilde{G}})\,\longrightarrow\, 0.
$$

It is known that the Picard group of the moduli stack $\mathcal{M}_{\widetilde{G}}$
is infinitely cyclic \cite{BLS:98, DN, KNR:94}. We denote the ample generator of
$\operatorname{Pic}(\mathcal{M}^{\delta}_{\widetilde{G}})$ by $\mathscr{L}_{\widetilde{G}}$. The space of global sections
$H^0(\mathcal{M}_{\widetilde{G}},\, \mathscr{L}_{\widetilde{G}}^{\otimes \ell})$ is known as the space of non-abelian theta functions. 

Given any nonzero homomorphisms $\varphi'\,:\, \gfrak_1 \,\longrightarrow \,\gfrak$ of simple Lie algebras, consider the
corresponding homomorphism $\varphi''\,:\, \widetilde{G}_1 \,\longrightarrow \,\widetilde{G}$ between the
associated simply connected groups; note that $\varphi'$ is necessarily injective. This
$\varphi''$ induces a map of the corresponding moduli stacks via the associated construction 
\begin{equation}\label{vp}
\varphi\,:\, \mathcal{M}_{\widetilde{G}_1} \,\longrightarrow \,\mathcal{M}_{\widetilde{G}}.
\end{equation}
It follows from \cite[p.~59, Section 5]{KumarNarasimhan:97} that $\varphi^*\mathscr{L}_{\widetilde{G}}\,\cong\,
\mathscr{L}_{\widetilde{G}_1}^{\otimes d_{\varphi}}$ for the map in \eqref{vp}, where $d_{\varphi}$ is the Dynkin index
of the embedding $\varphi''$.
This isomorphism induces a map of the global sections 
\begin{equation}\label{vp2}
\varphi^*\,:\, H^0(\mathcal{M}_{\widetilde{G}},\, \mathscr{L}_{\widetilde{G}})\,\longrightarrow\,
H^0(\mathcal{M}_{\widetilde{G}_1},\, \mathscr{L}^{\otimes d_{\varphi}}_{\widetilde{G}_1})
\end{equation}
Now via the uniformization theorems of Beauville-Laszlo \cite{BeauvilleLaszlo:94}, Kumar-Narasimhan-Ramanathan \cite{KNR:94},
Faltings \cite{Faltings:94}, Laszlo-Sorger \cite{LaszloSorger:97}, we get an isomorphism 
\begin{equation}\label{v3}
H^0(\mathcal{M}_{\widetilde{G}},\, \mathscr{L}^{\otimes \ell}_{\widetilde{G}})\,\,\cong\,\,
\mathcal{V}^{\dagger}_{\Lambda_0}(\mathfrak{X},\, \gfrak,\, \ell),
\end{equation}
where $\mathfrak{X}$ is the data of a smooth curve $C$ with one marked point along with a choice of a formal parameter at
the marked point and $\Lambda_0$ is the vacuum representation at level $\ell$. The following diagram is commutative: 
\begin{equation}
\begin{tikzcd}
 H^0(\mathcal{M}_{\widetilde{G}},\, \mathscr{L}_{\widetilde{G}}) \ar[r, "\varphi"]\ar[d, "\cong"]& H^0(\mathcal{M}_{\widetilde{G}_1},\, \mathscr{L}^{\otimes d_{\varphi}}_{\widetilde{G}_1}) \ar[d, "\cong"]\\
 \mathcal{V}^{\dagger}_{\Lambda_0}(\mathfrak{X},\, \gfrak,\, 1)\ar[r,"\varphi"]& \mathcal{V}^{\dagger}_{\Lambda_0}(\mathfrak{X},
\, \gfrak_1,\, d_{\varphi})
\end{tikzcd}
\end{equation}

\subsection{The adjoint representation}

Recall that a square root of the canonical line bundle $K_C$ of $C$ is called a theta characteristic on $C$. 
The set of theta characteristics of $C$ is a torsor over the group of two torsion points $J_2(C)$ of the 
Jacobian. Now identifying $J_2(C)$ with the dual $\widehat{J_2(C)}$ via the Weil pairing, we see that the space 
of theta characteristics is a torsor for $\widehat{J_2(C)}$.

For every theta characteristic $\kappa$ of $C$, Laszlo-Sorger \cite[p.~517, Section 7.8]{LaszloSorger:97} 
constructed a natural square-root $\mathcal{P}_{\kappa}$ of the determinant of cohomology on 
$\mathcal{M}_{\SO_r}$ --- which is known as the Pfaffian line bundle --- along with a canonical Pfaffian
section $s_{\kappa}$. The divisor corresponding to the Pfaffian section consists
of the following associated bundles:
\begin{equation}\label{s4}
\Xi_{\kappa}\,\,:=\,\,\{ E \,\in\, \mathcal{M}_{\SO_r}\,\big\vert\,\, h^0(E\otimes \kappa)\,\neq\, 0\} \,\subseteq \,\mathcal{M}_{\SO_r}.
\end{equation}

Let us recall the following results \cite{Belkale:12} (for $\mathcal{M}_{\SO_r}^0$) and
\cite[Proposition 3.5]{MukhopadhyayWentworth:17} (for $\mathcal{M}_{\SO_r}^-$) about these
Pfaffian sections $s_{\kappa}$ being non-zero. Consider the decomposition
$$\mathcal{M}_{\SO_r}\,=\,\mathcal{M}_{\SO_r}^{0}\sqcup \mathcal{M}_{\SO_r}^{-},$$ where
$\mathcal{M}_{\SO_r}^-$ parametrizes bundles with non-trivial Stiefel-Whitney class. Then, we get:
\begin{enumerate}
\item If $r$ is even and $\kappa$ is any theta characteristic, then $H^0(\mathcal{M}_{\SO_r}^0,\,
\mathcal{P}_\kappa)$ is one dimensional. 

\item If $r$ is even and $\kappa$ is any theta characteristic, then $H^0(\mathcal{M}_{\SO_r}^-,\,
\mathcal{P}_\kappa)$ is zero dimensional. 

\item If $r$ is odd, then $H^0(\mathcal{M}_{\SO_r}^0,\,\mathcal{P}_{\kappa})$ is one
dimensional if and only if $\kappa$ is even.

\item If $r$ is odd, then $H^0(\mathcal{M}_{\SO_r}^-,\,\mathcal{P}_{\kappa})$ is one dimensional if and only if 
$\kappa$ is odd.

\item Each Pfaffian section $s_{\kappa}$ is projectively flat with respect to the Hitchin connection. 

\item The Pfaffian sections $\{s_{\kappa}\}$ are linearly independent and hence 
\begin{itemize}
\item if $r$ is even, then $\dim_{\mathbb{C}}H^0(\mathcal{M}_{\Spin_r},\,\mathscr{L}_{\Spin_r})\,=\,2^{2g}$, and

\item if $r$ is odd, then $\dim_{\mathbb{C}} H^0(\mathcal{M}_{\Spin_r},\,\mathscr{L}_{\Spin_r})\,=\,2^{g-1}(2^g+1)$.
\end{itemize}
\end{enumerate}
A choice of a theta characteristic induces an action of $J_2(C)$ on $H^0(\mathcal{M}_{\Spin_r },\,
\mathscr{L}_{\Spin_r})$ and consequently there is a decomposition (see \cite[p.~2]{Belkale:12},
\cite[p.~14, Proposition 3.7]{MukhopadhyayWentworth:17})
\begin{equation}
H^0(\mathcal{M}_{\Spin_r },\,\mathscr{L}_{\Spin_r})\,=\,\bigoplus_{\chi \in \widehat{J_2(C)}} H^0(\mathcal{M}_{\SO_r}^0,
\, \mathcal{P}_{\kappa}\otimes \mathcal{L}_{\chi}),
\end{equation}
where $\mathcal{L}_{\chi}$ is the line bundle associated to the character $\chi$. 
Moreover by \cite[Proposition 5.2]{BLS:98}, \cite[Proposition 2.2]{Belkale:12}, \cite[Proposition 3.9]{MukhopadhyayWentworth:17}
we have $\mathcal{P}_{\kappa}\otimes \mathcal{L}_{\chi}\,\cong\, \mathcal{P}_{\kappa'}$, where
$\kappa'$ is just the image of $\kappa$ under the action of $\chi \,\in\, \widehat{J_2(C)}$.

The adjoint representation of any connected semisimple group $G$ gives a homomorphism $$\Ad_G\,:\, G\,
\longrightarrow \,\SO_{\dim \gfrak}$$ whose kernel is finite, and we have the following commutative diagram
\begin{equation}\label{l}
\begin{tikzcd}[column sep=huge]
\widetilde{G} \ar[r, "\widetilde{\Ad}"] \ar[d,"\pi_G"'] \ar[rd,"\Ad_{\widetilde{G}}" description ]& \Spin_{\dim \gfrak} \ar[d,"\pi"]\\
G \ar[r, "\Ad_G"'] & \SO_{\dim \gfrak},
\end{tikzcd} 
\end{equation}
where $\widetilde{G}$ is the simply connected cover of $G$. In particular, when $G\,=\,\widetilde{G}$, we have
the following commutative diagram: 
\begin{equation}
\begin{tikzcd}[column sep=huge]
&\mathcal{M}_{\Spin_{\dim{\gfrak}}}\ar[d,"\pi"]\\
	\mathcal{M}_{\widetilde{G}}\ar[d,"\pi_{G}"] \ar[r,"\Ad_{\widetilde{G}}" description ]\ar[ru,"\widetilde{\Ad}"]& \mathcal{M}^{0}_{\SO_{\dim{\gfrak}}}\\
	\mathcal{M}^0_{G}\ar[ru,"\Ad_G"']
\end{tikzcd}
\end{equation}

The following is a consequence of Theorem \ref{thm:nonzero}.

\begin{corollary}\label{cor:nonzero}
If $\dim{\gfrak}$ is odd (respectively, even), then a choice of an even (respectively, any) theta
characteristic $\kappa$ gives a
non-zero map $$\Ad_{\widetilde{G}}\,:\, H^0(\mathcal{M}_{\SO_{\dim \gfrak}},\, \mathcal{P}_{\kappa}) \,\longrightarrow
\,H^0(\mathcal{M}_{\widetilde{G}},\,\mathscr{L}^{d_{\ad}}_{\widetilde{G}}),$$ where $d_{\ad}$ is the Dynkin index of the adjoint representation of $\mathfrak{g} \hookrightarrow \mathfrak{so}(\mathfrak{g})$. Moreover, this map factors through
$H^0(\mathcal{M}_{G}^{0},\, \mathscr{P}_{\chi})$, where $\mathscr{P}_{\chi}$ is a line bundle
on $\mathcal{M}^0_{G}$ which is pulled back from $\mathcal{P}_{\kappa}$; in other words, there is a commutative diagram
$$
\begin{tikzcd}
 H^0(\mathcal{M}_{\SO_{\dim \gfrak}},\, \mathcal{P}_{\kappa}) \ar[r, "\Ad_{\widetilde{G}}"] \ar[rd] &
H^0(\mathcal{M}_{\widetilde{G}},\,\mathscr{L}^{d_{\ad}}_{\widetilde{G}}) \\ 
 & H^0(\mathcal{M}^0_{{G}},\, \mathscr{P}_{\chi}) \ar[u,hook].
 \end{tikzcd}
$$
\end{corollary}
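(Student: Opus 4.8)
The plan is to transport the statement into the language of conformal blocks via the uniformization isomorphisms, where the required non-vanishing is exactly Theorem \ref{thm:nonzero}, and then to unwind the dictionary carefully enough to see that the non-vanishing lands on the one-dimensional Pfaffian summand attached to $\kappa$. First I would record the identifications. By \eqref{v3}, $H^0(\mathcal{M}_{\widetilde G},\mathscr{L}_{\widetilde G}^{d_{\ad}})$ is the dual of the space of covacua $\mathcal{V}_{\omega_0}(C,\gfrak,d_{\ad})$ (with $\omega_0$ the level-$d_{\ad}$ vacuum weight of $\gfrak$), and similarly $H^0(\mathcal{M}_{\Spin_{\dim\gfrak}},\mathscr{L}_{\Spin_{\dim\gfrak}})$ is the dual of $\mathcal{V}_{\Lambda_0}(C,\sfrak\ofrak(\gfrak),1)$. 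Since the uniformization is compatible with functoriality --- as recorded by the commutative square following \eqref{v3} --- the transpose of the conformal-block map \eqref{eqn:genad2} is identified with the pullback $\widetilde{\Ad}^{*}\colon H^0(\mathcal{M}_{\Spin_{\dim\gfrak}},\mathscr{L}_{\Spin_{\dim\gfrak}})\to H^0(\mathcal{M}_{\widetilde G},\mathscr{L}_{\widetilde G}^{d_{\ad}})$ along $\widetilde{\Ad}\colon\mathcal{M}_{\widetilde G}\to\mathcal{M}_{\Spin_{\dim\gfrak}}$, where $\widetilde{\Ad}^{*}\mathscr{L}_{\Spin_{\dim\gfrak}}\cong\mathscr{L}_{\widetilde G}^{d_{\ad}}$ by Kumar--Narasimhan because the Dynkin index of $\widetilde{\Ad}$ is $d_{\ad}$. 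Thus Theorem \ref{thm:nonzero}, together with the fact that the transpose of a non-zero linear map of finite-dimensional spaces is non-zero, is exactly the statement that $\widetilde{\Ad}^{*}$ is non-zero.

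Next I would carry out the line-bundle and parity bookkeeping. The two-torsion twists $\mathcal{L}_\chi$ become trivial on $\mathcal{M}_{\Spin_{\dim\gfrak}}$ (by the Beauville--Laszlo--Sorger sequence), so $\pi^{*}\mathcal{P}_\kappa\cong\mathscr{L}_{\Spin_{\dim\gfrak}}$ for $\pi\colon\mathcal{M}_{\Spin_{\dim\gfrak}}\to\mathcal{M}_{\SO_{\dim\gfrak}}$; hence $\Ad_{\widetilde G}^{*}\mathcal{P}_\kappa\cong\widetilde{\Ad}^{*}\pi^{*}\mathcal{P}_\kappa\cong\mathscr{L}_{\widetilde G}^{d_{\ad}}$ and the map in the statement is well defined, and $\pi^{*}$ embeds $H^0(\mathcal{M}^0_{\SO_{\dim\gfrak}},\mathcal{P}_\kappa)$ into $H^0(\mathcal{M}_{\Spin_{\dim\gfrak}},\mathscr{L}_{\Spin_{\dim\gfrak}})$ as the trivial-character summand of the decomposition $H^0(\mathcal{M}_{\Spin_{\dim\gfrak}},\mathscr{L}_{\Spin_{\dim\gfrak}})=\bigoplus_{\chi}H^0(\mathcal{M}^0_{\SO_{\dim\gfrak}},\mathcal{P}_\kappa\otimes\mathcal{L}_\chi)$. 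The parity hypotheses enter only here: by items (1)--(4) of the list recalled from \cite{Belkale:12} and \cite{MukhopadhyayWentworth:17}, under the stated assumptions $H^0(\mathcal{M}^0_{\SO_{\dim\gfrak}},\mathcal{P}_\kappa)\cong\mathbb{C}$ and $H^0(\mathcal{M}^-_{\SO_{\dim\gfrak}},\mathcal{P}_\kappa)=0$, so $H^0(\mathcal{M}_{\SO_{\dim\gfrak}},\mathcal{P}_\kappa)=\mathbb{C}\,s_\kappa$ with $s_\kappa$ the Pfaffian section; and $\mathcal{M}_{\widetilde G}$, connected and containing the trivial bundle, maps into $\mathcal{M}^0_{\SO_{\dim\gfrak}}$, so the map in the statement is just the restriction of $\widetilde{\Ad}^{*}\circ\pi^{*}$ to $\mathbb{C}\,s_\kappa$. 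By item (6) the sections $\pi^{*}s_{\kappa'}$, for $\kappa'$ running over the admissible theta characteristics, form a basis of $H^0(\mathcal{M}_{\Spin_{\dim\gfrak}},\mathscr{L}_{\Spin_{\dim\gfrak}})$, so $\widetilde{\Ad}^{*}\neq0$ forces $\widetilde{\Ad}^{*}(\pi^{*}s_{\kappa'_0})\neq0$ for at least one admissible $\kappa'_0$.

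The remaining, and principal, point is to upgrade this to every admissible $\kappa$. Here I would invoke equivariance: $\widetilde{\Ad}$ carries $Z(\widetilde G)$ into $\mu_2=\ker(\Spin_{\dim\gfrak}\to\SO_{\dim\gfrak})$, inducing a homomorphism $H^1(C,Z(\widetilde G))\to H^1(C,\mu_2)=J_2(C)$ for which $\Ad_{\widetilde G}$, hence $\widetilde{\Ad}^{*}$, is equivariant, the $J_2(C)$-action on the source permuting the Pfaffian lines $\mathbb{C}\,s_{\kappa'}$ simply transitively (the theta characteristics being a $J_2(C)$-torsor). Whenever this homomorphism is surjective --- which holds as soon as $Z(\widetilde G)\to\mu_2$ is, since the associated Bockstein $H^1(C,\mathbb{Z}/2)\to H^2(C,\mathbb{Z}/2)$ vanishes on a curve --- one then transports $\kappa'_0$ to an arbitrary admissible $\kappa$ with non-vanishing preserved, and the proof is complete. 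The genuine obstacle is the case $\widetilde{\Ad}(Z(\widetilde G))=\{1\}$: there the equivariance above is empty and a separate argument is needed, for instance by using the projective flatness of the Pfaffian sections for the Hitchin connection (item (5)) and the flatness of \eqref{eqn:genad2} for the projective connections (Proposition \ref{prop:flatness}) to reduce the comparison over different $\kappa$ to one curve, or by tracking the distinguished vectors through the branching of Proposition \ref{prop:branching}. Finally, the factorization is formal: from diagram \eqref{l}, $\Ad_{\widetilde G}=\Ad_G\circ\pi_G$ on moduli stacks, so with $\mathscr{P}_\chi:=\Ad_G^{*}\mathcal{P}_\kappa$ on $\mathcal{M}^0_G$ --- a line bundle pulled back from $\mathcal{P}_\kappa$, satisfying $\pi_G^{*}\mathscr{P}_\chi\cong\mathscr{L}_{\widetilde G}^{d_{\ad}}$ by Beauville--Laszlo--Sorger --- the map $\Ad_{\widetilde G}^{*}$ factors as $H^0(\mathcal{M}_{\SO_{\dim\gfrak}},\mathcal{P}_\kappa)\xrightarrow{\Ad_G^{*}}H^0(\mathcal{M}^0_G,\mathscr{P}_\chi)\xrightarrow{\pi_G^{*}}H^0(\mathcal{M}_{\widetilde G},\mathscr{L}_{\widetilde G}^{d_{\ad}})$ with $\pi_G^{*}$ injective, which is the asserted commutative triangle; in particular the middle space is non-zero too.
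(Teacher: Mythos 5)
Your first two paragraphs are essentially the paper's (implicit) deduction: dualizing \eqref{eqn:genad2} via the uniformization isomorphism \eqref{v3}, identifying the transpose with the pullback $\widetilde{\Ad}^{*}$ along $\widetilde{\Ad}\colon \mathcal{M}_{\widetilde G}\to\mathcal{M}_{\Spin_{\dim\gfrak}}$ using the Dynkin index, the identifications $\pi^{*}\mathcal{P}_\kappa\cong\mathscr{L}_{\Spin_{\dim\gfrak}}$, the parity facts making $H^0(\mathcal{M}_{\SO_{\dim\gfrak}},\mathcal{P}_\kappa)=\mathbb{C}\,s_\kappa$, and the factorization through $H^0(\mathcal{M}^0_G,\mathscr{P}_\chi)$ with $\mathscr{P}_\chi=\Ad_G^{*}\mathcal{P}_\kappa$ and $\pi_G^{*}$ injective. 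You also correctly isolate the real issue, which the paper passes over in silence: Theorem \ref{thm:nonzero} only says $\widetilde{\Ad}^{*}\neq 0$, hence (the Pfaffian sections being a basis) that $\widetilde{\Ad}^{*}(\pi^{*}s_{\kappa'_0})\neq 0$ for \emph{some} admissible $\kappa'_0$, while the corollary asserts this for \emph{every} admissible $\kappa$.

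The gap is that your mechanism for passing from $\kappa'_0$ to an arbitrary admissible $\kappa$ does not work. The $J_2(C)$-action you invoke is twisting by torsors under $\mu_2=\ker(\Spin_{\dim\gfrak}\to\SO_{\dim\gfrak})$; since $\mu_2$ maps to the identity in $\SO_{\dim\gfrak}$, this action covers the \emph{identity} of $\mathcal{M}^0_{\SO_{\dim\gfrak}}$ — equivalently, twisting $E_{\widetilde G}$ by a central torsor does not change $\ad(E_{\widetilde G})$, so $\Ad_{\widetilde G}$ is ``equivariant'' only in the trivial sense. Consequently this action preserves each pulled-back divisor $\pi^{-1}(\Xi_{\kappa'})$ and fixes each line $\mathbb{C}\,\pi^{*}s_{\kappa'}$ up to scalar: these lines are exactly the character eigenlines of the action, which is precisely the content of the decomposition $H^0(\mathcal{M}_{\Spin_{\dim\gfrak}},\mathscr{L}_{\Spin_{\dim\gfrak}})=\bigoplus_{\chi}H^0(\mathcal{M}^0_{\SO_{\dim\gfrak}},\mathcal{P}_\kappa\otimes\mathcal{L}_\chi)$ recalled in the paper; it does not permute them. (The simply transitive action on theta characteristics is tensoring by $2$-torsion line bundles of the Jacobian — a different action, which for $\dim\gfrak$ odd does not even preserve $\mathcal{M}_{\SO_{\dim\gfrak}}$, and in any case does not preserve the image of $\Ad_G$ since $\ad(E_G)\otimes\beta$ is not an adjoint bundle.) So even when $Z(\widetilde G)\to\mu_2$ is onto your transport argument yields nothing, and you concede it is empty when $\widetilde{\Ad}(Z(\widetilde G))$ is trivial (e.g. $G_2,F_4,E_8$); the alternatives you sketch (projective flatness, tracking branching vectors) are not carried out. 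Thus the central claim — non-vanishing on $\mathbb{C}\,s_\kappa$ for every admissible $\kappa$ — remains unproved. A route that can be made to work is monodromy in families: by naturality the mapping class group permutes the Pfaffian lines exactly as it permutes theta characteristics (transitively within each parity class), the sections $s_\kappa$ are projectively flat for the Hitchin connection and the map \eqref{eqn:genad2} is flat for the projective connections (Proposition \ref{prop:flatness}), so non-vanishing propagates through each parity class — though for $\dim\gfrak$ even one must still produce one non-vanishing $\kappa$ of each parity. To be fair, the paper itself states the corollary as an immediate consequence of Theorem \ref{thm:nonzero} without addressing this point, so you have put your finger on the right difficulty, but your proposed fix rests on a misidentification of the relevant group action.
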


We note an immediate consequence of Corollary \ref{cor:nonzero}.

\begin{corollary}\label{cor:nonzero2}
If $\dim{\gfrak}$ is odd (respectively, even), let $\kappa$ be an even (respectively, any) theta characteristic.
Then the image of the morphism
$$
\mathcal{M}^0_{{G}}\,\, \longrightarrow\,\, \mathcal{M}_{\SO_{\dim \gfrak}},
$$
given by $\Ad_G$ in \eqref{l}, is not contained in the divisor $\Xi_{\kappa}$ (see \eqref{s4}).
\end{corollary}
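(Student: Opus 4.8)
The plan is to deduce Corollary~\ref{cor:nonzero2} directly from Corollary~\ref{cor:nonzero} by translating the statement ``the Pfaffian section $s_\kappa$ does not vanish identically on the image of $\Ad_G$'' into the statement ``the image is not contained in $\Xi_\kappa$.'' First I would recall the setup: by \cite{LaszloSorger:97} the Pfaffian line bundle $\mathcal{P}_\kappa$ on $\mathcal{M}_{\SO_{\dim\gfrak}}$ carries the canonical section $s_\kappa$, whose zero divisor is precisely $\Xi_\kappa$ as in \eqref{s4}. The morphism $\Ad_{\widetilde{G}}\,:\,\mathcal{M}_{\widetilde{G}}\,\longrightarrow\,\mathcal{M}^0_{\SO_{\dim\gfrak}}\,\hookrightarrow\,\mathcal{M}_{\SO_{\dim\gfrak}}$ pulls back $\mathcal{P}_\kappa$ to $\mathscr{L}^{d_{\ad}}_{\widetilde{G}}$ (by the Dynkin index computation in Section~2, where $d_{\ad}\,=\,h^\vee(\gfrak)$ and by \cite[Section~5]{KumarNarasimhan:97}), and pulls back $s_\kappa$ to the global section $\Ad_{\widetilde{G}}(s_\kappa)\,\in\,H^0(\mathcal{M}_{\widetilde{G}},\,\mathscr{L}^{d_{\ad}}_{\widetilde{G}})$. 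The key point is that Corollary~\ref{cor:nonzero} asserts that this pulled-back section is \emph{nonzero}.

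Next I would argue that a nonzero pulled-back section forces the image to meet the complement of the divisor. Concretely, if the image of $\Ad_G$ (equivalently, the image of $\Ad_{\widetilde{G}}$, since $\Ad_{\widetilde{G}}$ factors through $\Ad_G$ via $\pi_G$, which is surjective by the commutative diagram \eqref{l}) were contained in $\Xi_\kappa$, then $s_\kappa$ would restrict to zero on that image, and hence its pullback $\Ad_{\widetilde{G}}(s_\kappa)$ would be the zero section on $\mathcal{M}_{\widetilde{G}}$. This contradicts Corollary~\ref{cor:nonzero}. One mild subtlety to address is that $\Xi_\kappa$ is defined set-theoretically, so ``image contained in $\Xi_\kappa$'' means containment of underlying point sets; but since $\mathcal{M}_{\widetilde{G}}$ is reduced (indeed integral) and $s_\kappa$ is a section whose scheme-theoretic zero locus is cut out by $\Xi_\kappa$, vanishing of the pullback at every point of the image is equivalent to the pullback being the zero section. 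I would also note that $\Ad_G$ lands in the neutral component $\mathcal{M}^0_{\SO_{\dim\gfrak}}$, since $\widetilde{G}$ is connected and maps the trivial bundle to a bundle of trivial Stiefel--Whitney class; this is exactly the component on which Corollary~\ref{cor:nonzero} guarantees nonvanishing (cases (1) and (3) of the Laszlo--Sorger dichotomy, according to the parity of $\dim\gfrak$ and the choice of $\kappa$).

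Assembling these pieces: suppose for contradiction that $\operatorname{Im}(\Ad_G)\,\subseteq\,\Xi_\kappa$. Then $\Ad_{\widetilde{G}}(s_\kappa)\,=\,0$ in $H^0(\mathcal{M}_{\widetilde{G}},\,\mathscr{L}^{d_{\ad}}_{\widetilde{G}})$, because a global section vanishing on a dense-in-the-image—indeed on the whole image—subset of a reduced scheme, where the image is where $s_\kappa$ restricts, must be zero. But Corollary~\ref{cor:nonzero} says precisely that $\Ad_{\widetilde{G}}\,:\,H^0(\mathcal{M}_{\SO_{\dim\gfrak}},\,\mathcal{P}_\kappa)\,\longrightarrow\,H^0(\mathcal{M}_{\widetilde{G}},\,\mathscr{L}^{d_{\ad}}_{\widetilde{G}})$ is nonzero, and since $H^0(\mathcal{M}^0_{\SO_{\dim\gfrak}},\,\mathcal{P}_\kappa)$ is one-dimensional and spanned by $s_\kappa$, the image of this map is spanned by $\Ad_{\widetilde{G}}(s_\kappa)$; hence $\Ad_{\widetilde{G}}(s_\kappa)\,\neq\,0$. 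This is the desired contradiction, completing the proof.

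I do not expect any serious obstacle here: the corollary is essentially a formal unwinding of Corollary~\ref{cor:nonzero}. The only point requiring a little care is the bookkeeping that relates the three relevant spaces—$H^0(\mathcal{M}_{\SO_{\dim\gfrak}},\,\mathcal{P}_\kappa)$, its restriction to the neutral component, and $H^0(\mathcal{M}_{\widetilde{G}},\,\mathscr{L}^{d_{\ad}}_{\widetilde{G}})$—and the observation that $\Ad_G$ factors the map $\Ad_{\widetilde{G}}$ through the surjection $\pi_G$, so that statements about the image of $\Ad_{\widetilde{G}}$ transfer to statements about the image of $\Ad_G$. Everything else is the standard fact that pulling back a section of a line bundle along a morphism into the zero divisor yields the zero section on a reduced scheme.
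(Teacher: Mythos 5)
Your proposal is correct and follows essentially the same route as the paper: both arguments use that $H^0(\mathcal{M}_{\SO_{\dim \gfrak}},\,\mathcal{P}_{\kappa})$ is one-dimensional with the divisor of its (essentially unique) nonzero section equal to $\Xi_{\kappa}$, so that containment of the image in $\Xi_{\kappa}$ would force the pullback map on sections to vanish, contradicting Corollary \ref{cor:nonzero}. The only cosmetic difference is that the paper phrases the contradiction via the factored map to $H^0(\mathcal{M}^0_{G},\,\mathscr{P}_{\chi})$, whereas you phrase it via the pulled-back section on $\mathcal{M}_{\widetilde{G}}$; these are the same argument.
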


\begin{proof}
Under the above assumption we know that $\dim H^0(\mathcal{M}_{\SO_{\dim \gfrak}},\,
\mathcal{P}_{\kappa})\,=\, 1$ \cite{Belkale:12, MukhopadhyayWentworth:17}, and the divisor
for any nonzero section of $\mathcal{P}_{\kappa}$ is $\Xi_{\kappa}$. Therefore,
if the image of the above morphism
$\mathcal{M}^0_{{G}}\,\, \longrightarrow\,\, \mathcal{M}_{\SO_{\dim \gfrak}}$
is contained in $\Xi_{\kappa}$, then the homomorphism 
$$
H^0(\mathcal{M}_{\SO_{\dim \gfrak}},\, \mathcal{P}_{\kappa})\, \longrightarrow\,
H^0(\mathcal{M}^0_{{G}},\, \mathscr{P}_{\chi})
$$
in the statement of Corollary \ref{cor:nonzero} becomes the zero map. But Corollary \ref{cor:nonzero}
says that this map is nonzero. In view of this contradiction we conclude that the image of the morphism
$$
\mathcal{M}^0_{{G}}\,\, \longrightarrow\,\, \mathcal{M}_{\SO_{\dim \gfrak}}
$$
is not contained in the divisor $\Xi_{\kappa}$.
\end{proof}

\section{A natural connection}\label{se5}

The main goal of this section is consider the  non-empty Zariski open substack in $\mathcal{M}_G^0$ given by Corollary \ref{cor:nonzero2} as the set of  algebraic principal $G$--bundles on $C$ such that $\operatorname{Ad}(E_G)$ is not in the divisor $\Xi_{\kappa}$, and show that  every element of this set admits a natural algebraic connection. 

Let $E_G$ be an algebraic principal $G$--bundle on $C$. An algebraic connection on $E_G$ produces an algebraic
connection on any algebraic fiber bundle associated to $E_G$, in particular, an algebraic connection is induced on
the adjoint vector bundle $\text{ad}(E_G)$. Let ${\mathcal C}(E_G)$ and ${\mathcal C}(\text{ad}(E_G))$ be the spaces
of algebraic connections on $E_G$ and $\text{ad}(E_G)$ respectively. Let
$$
\Phi_0\,:\, {\mathcal C}(E_G)\, \longrightarrow\, {\mathcal C}(\text{ad}(E_G))
$$
be the above map.

\begin{lemma}\label{lem1}
There is a natural map
$$
\Phi\,:\, {\mathcal C}({\rm ad}(E_G))\,\, \longrightarrow\, \, {\mathcal C}(E_G)
$$
such that $\Phi\circ\Phi_0\,=\, {\rm Id}_{{\mathcal C}(E_G)}$.
\end{lemma}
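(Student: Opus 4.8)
The plan is to produce the map $\Phi$ by exploiting the fact that the adjoint representation $\ad\colon\gfrak\to\sfrak\ofrak(\gfrak)\subset\sfrak\lfrak(\gfrak)$ is injective, so that $\gfrak$ is a Lie subalgebra of $\mathrm{End}(\gfrak)$, together with the invariant splitting of $\mathrm{End}(\gfrak)$ as a $\gfrak$-module. Concretely, write $r=\dim\gfrak$; via $\ad$ the bundle $\mathrm{ad}(E_G)$ is a subbundle of $\mathrm{End}(\mathrm{ad}(E_G))\cong\mathrm{ad}(E_G)^*\otimes\mathrm{ad}(E_G)$. Because the Killing form is nondegenerate and $\gfrak$-invariant, the $\gfrak$-module $\mathrm{End}(\gfrak)$ decomposes $\Ad(G)$-equivariantly as $\gfrak\oplus\mathfrak{m}$, where $\mathfrak m$ is the orthogonal complement of $\ad(\gfrak)$ with respect to the trace form on $\mathrm{End}(\gfrak)$; globally this yields a $C^\infty$- (indeed algebraic) direct sum decomposition of vector bundles $\mathrm{End}(\mathrm{ad}(E_G))=\mathrm{ad}(E_G)\oplus M$, with associated projection $p\colon\mathrm{End}(\mathrm{ad}(E_G))\twoheadrightarrow\mathrm{ad}(E_G)$.

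Next I would recall the affine-space description of connections. A connection on $E_G$ is a $\gfrak$-valued object: the space ${\mathcal C}(E_G)$ is an affine space over $H^0(C,\mathrm{ad}(E_G)\otimes K_C)$, while ${\mathcal C}(\mathrm{ad}(E_G))$ is an affine space over $H^0(C,\mathrm{End}(\mathrm{ad}(E_G))\otimes K_C)$, and $\Phi_0$ is the affine map induced by the inclusion $\mathrm{ad}(E_G)\hookrightarrow\mathrm{End}(\mathrm{ad}(E_G))$ (a connection $D$ on $E_G$ induces the connection $\ad(D)$ on $\mathrm{ad}(E_G)$, and $\ad(D)-\ad(D')$ corresponds under this inclusion to $D-D'$). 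I then define $\Phi$ as follows: fix one connection $D_0$ on $E_G$, with induced connection $\nabla_0:=\ad(D_0)$ on $\mathrm{ad}(E_G)$. Given an arbitrary connection $\nabla$ on $\mathrm{ad}(E_G)$, the difference $\nabla-\nabla_0$ lies in $H^0(C,\mathrm{End}(\mathrm{ad}(E_G))\otimes K_C)$; apply $p\otimes\mathrm{id}_{K_C}$ to obtain an element of $H^0(C,\mathrm{ad}(E_G)\otimes K_C)$, and set $\Phi(\nabla):=D_0+(p\otimes\mathrm{id})(\nabla-\nabla_0)$. Since $p$ restricts to the identity on the subbundle $\mathrm{ad}(E_G)$, we get $\Phi(\ad(D))=D_0+(p\otimes\mathrm{id})(\ad(D)-\ad(D_0))=D_0+(D-D_0)=D$, i.e.\ $\Phi\circ\Phi_0=\mathrm{Id}$. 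Finally I would check that $\Phi$ is independent of the auxiliary choice of $D_0$: replacing $D_0$ by $D_1=D_0+\omega$ with $\omega\in H^0(C,\mathrm{ad}(E_G)\otimes K_C)$ changes both the base point and the linear correction by compensating amounts, precisely because $p$ is the identity on $\mathrm{ad}(E_G)$; hence $\Phi$ is canonical.

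The one point that needs genuine care — and which I expect to be the main (modest) obstacle — is verifying that the projection $p$ is $\Ad(G)$-equivariant and hence descends to a genuine morphism of the associated \emph{algebraic} vector bundles, rather than merely a $C^\infty$ splitting. This is where the choice of the (normalized Killing) form on $\gfrak$, and the fact that it is $G$-invariant, is used: the trace form on $\mathrm{End}(\gfrak)$ is $\Ad(G)$-invariant, $\ad(\gfrak)$ is an $\Ad(G)$-submodule, so its orthogonal complement $\mathfrak m$ is too, and the projection onto $\ad(\gfrak)$ along $\mathfrak m$ commutes with the $\Ad(G)$-action; therefore it globalizes to an algebraic bundle map $p$ over $C$. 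One should also note that $\mathrm{End}(\mathrm{ad}(E_G))$ is the bundle associated to $E_G$ via $\Ad(G)$ acting on $\mathrm{End}(\gfrak)$ by conjugation through $\ad$, so that the identification of ${\mathcal C}(\mathrm{ad}(E_G))$ with an affine space modeled on its sections is compatible with $\Phi_0$ exactly as claimed. Granting this, the construction of $\Phi$ and the identity $\Phi\circ\Phi_0=\mathrm{Id}$ are formal, and no curvature or integrability conditions enter since we are only manipulating affine spaces of connections.
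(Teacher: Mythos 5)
Your decomposition is exactly the one the paper uses (the trace form on $\mathfrak{gl}(\mathfrak g)$, the $G$-equivariant orthogonal projection $P\colon \mathfrak{gl}(\mathfrak g)\to\mathfrak g$, and the observation that equivariance makes everything algebraic), and your verification that $\Phi\circ\Phi_0={\rm Id}$ and that the construction is independent of auxiliary choices is fine as far as it goes. But there is a genuine gap: your definition of $\Phi$ requires choosing a base algebraic connection $D_0\in{\mathcal C}(E_G)$, i.e.\ it presupposes ${\mathcal C}(E_G)\neq\emptyset$. The lemma is stated for an arbitrary algebraic principal $G$-bundle $E_G$, and for such bundles ${\mathcal C}(E_G)$ can perfectly well be empty (for $G={\rm SL}(2,\mathbb C)$ take the bundle associated to $L\oplus L^{-1}$ with $\deg L\neq 0$). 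More importantly, in the application (Theorem \ref{th1}) the whole point of the lemma is to \emph{produce} a connection on $E_G$ from the canonically constructed connection on ${\rm ad}(E_G)$; at that stage no connection on $E_G$ is yet known to exist, so your map is not defined exactly where it is needed. In effect, the implication ``${\mathcal C}({\rm ad}(E_G))\neq\emptyset\Rightarrow{\mathcal C}(E_G)\neq\emptyset$'' is part of the content of the lemma, and your argument assumes it rather than proves it.

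The fix is to make the construction base-point free, which is what the paper does: view a connection $\nabla$ on ${\rm ad}(E_G)$ as a $\mathfrak{gl}(\mathfrak g)$-valued connection $1$-form on the frame bundle $E_{{\rm GL}(\mathfrak g)}$ of ${\rm ad}(E_G)$, pull it back along the natural $G$-equivariant map $\Psi\colon E_G\to E_{{\rm GL}(\mathfrak g)}$, $z\mapsto (z,{\rm Id}_{\mathfrak g})$, and compose with your projection $P$; since $P$ is a homomorphism of $G$-modules, $P\circ(\Psi^*\nabla)$ satisfies the two defining conditions of a connection form on $E_G$, and $\Phi\circ\Phi_0={\rm Id}$ follows because $P\circ{\rm ad}={\rm id}_{\mathfrak g}$. (Alternatively, within your sheaf-theoretic setup you could first kill the obstruction: the Atiyah class of $E_G$ is recovered from that of ${\rm ad}(E_G)$ by applying $p$, so vanishing of the latter forces vanishing of the former; only after that does your affine-space argument apply.) Once the definition is made intrinsically, your identification of the resulting map with the affine formula $D_0+(p\otimes{\rm id})(\nabla-{\rm ad}(D_0))$, whenever some $D_0$ exists, is a correct consistency check.
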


\begin{proof}
Consider the adjoint homomorphism ${\mathfrak g}\, \hookrightarrow\, {\mathfrak g}\otimes 
{\mathfrak g}^*\,=\, {\mathfrak g}{\mathfrak l}({\mathfrak g})$
. We have the symmetric bilinear 
form on ${\mathfrak g}{\mathfrak l}({\mathfrak g})$ defined by $A\otimes B\, \longmapsto\, 
\text{trace}(AB)$. Its restriction to $\mathfrak g$ is a constant scalar multiple of the 
Killing form of $\mathfrak g$. Consider the corresponding orthogonal decomposition
$$
{\mathfrak g}{\mathfrak l} ({\mathfrak g})\,=\, {\mathfrak g}\oplus {\mathfrak g}^\perp .
$$
Let
\begin{equation}\label{zz}
P\,\,:\,\, {\mathfrak g}{\mathfrak l} ({\mathfrak g})\,\, \longrightarrow\,\, {\mathfrak g}
\end{equation}
be the projection constructed using the above decomposition of ${\mathfrak g}{\mathfrak l} ({\mathfrak g})$.
The adjoint action of $G$ on $\mathfrak g$ produces an action of $G$ on ${\mathfrak g}\otimes 
{\mathfrak g}^*\,=\, {\mathfrak g}{\mathfrak l}({\mathfrak g})$. The projection $P$ in \eqref{zz} is
a homomorphism of $G$--modules.

Let $E_{\text{GL}({\mathfrak g})}$ be the principal $\text{GL}({\mathfrak g})$--bundle $C$ 
corresponding to the vector bundle $\text{ad}(E_G)$. We note that $E_{\text{GL}({\mathfrak g})}$ is the 
quotient of $E_G\times \text{GL}({\mathfrak g})$ where $(z,\, B)$ is identified with $(zg,\, 
\text{Ad}(g^{-1})B \text{Ad}(g))$ for all $g\, \in\, G$. We have a natural map
$$
\Psi\,:\, E_G\,\longrightarrow\, E_{\text{GL}({\mathfrak g})}
$$
that sends any $z\, \in\, E_G$ to the equivalence class of $(z,\, \text{Id}_{\mathfrak g})$.

Let $\nabla$ be an algebraic connection on $\text{ad}(E_G)$. So $\nabla$ is an algebraic $1$--form on $E_{\text{GL}({\mathfrak g})}$,
with values in ${\mathfrak g}{\mathfrak l} ({\mathfrak g})$, satisfying certain conditions. Therefore,
$P\circ (\Psi^*\nabla)$ is a $\mathfrak g$--valued algebraic $1$--form on $E_G$, where $P$ is the projection in
\eqref{zz}. Using the fact that $P$ in \eqref{zz} is a homomorphism of $G$--modules it is straightforward
to check that $P\circ (\Psi^*\nabla)$ satisfies the two conditions needed
to define a connection on $E_G$. It is also evident that the map
$$
\Phi\,:\, {\mathcal C}(\text{ad}(E_G))\,\, \longrightarrow\, \, {\mathcal C}(E_G)
$$
constructed this way satisfies the condition $\Phi\circ\Phi_0\,=\, {\rm Id}_{{\mathcal C}(E_G)}$.
\end{proof}

Take a theta characteristic $\kappa$ on $C$. We assume that if $\dim{\gfrak}$ is odd,
then $\kappa$ is an even theta characteristic. There is no condition on $\kappa$ when $\dim{\gfrak}$ is even.

Consider the map
$$
F\,\,:\,\, \mathcal{M}^0_{{G}}\,\, \longrightarrow\,\, \mathcal{M}_{\SO_{\dim \gfrak}}
$$
in Corollary \ref{cor:nonzero2} that sends any $E_G$ to $\text{ad}(E_G)$ equipped with the
Killing form on the fibers. Take any principal $G$--bundle $E_G\, \in\, \mathcal{M}^0_{{G}}$ such that
\begin{equation}\label{y2}
F(E_G)\,\, \notin\,\,\Xi_{\kappa}
\end{equation}
(see \eqref{s4}); from Corollary \ref{cor:nonzero2} we know that the locus, in $\mathcal{M}^0_{{G}}$,
of all such $E_G$ is a nonempty Zariski open substack. From \eqref{y2} we know that
\begin{equation}\label{y2n}
H^0(C,\, \text{ad}(E_G)\otimes\kappa)\,=\,0\,
=\,H^0(C,\, \text{ad}(E_G)\otimes\kappa),
\end{equation}
which implies that $\text{ad}(E_G)$ is semistable. This in turn
implies that the principal $G$-bundle $E_G$ is semistable.

Since $H^0(C,\, \text{ad}(E_G)\otimes\kappa)\,=\,0\,
=\,H^1(C,\, \text{ad}(E_G)\otimes\kappa)$, from \cite{BB}, \cite{BH} we know that
$\text{ad}(E_G)$ has a canonical algebraic connection. We will now briefly recall the construction of
this connection on $\text{ad}(E_G)$.

For $i\,=\, 1,\, 2$, let $p_i\, :\, C\times C\, \longrightarrow\, C$ be the natural projections. Let
$$
\Delta\, :=\, \{(x,\, x)\, \in\, C\times C\, \big\vert x\,\,\, \in\, C\}\, \subset\, C\times C
$$
be the reduced diagonal divisor. We will identify $\Delta$ with $C$ using the map $x \, \longmapsto\, (x,\, x)$, where
$x\,\in\, C$. The restriction of $(p^*_1\kappa)\otimes (p^*_2\kappa)$
to $\Delta$ is evidently identified with $K_C$. Also,
the restriction of ${\mathcal O}_{C\times C}(\Delta)$ to $\Delta$ is identified with the tangent bundle $TC$ using 
the Poincar\'e adjunction formula (see \cite[p.~146]{GH}). Therefore, we have the following short exact sequence of coherent
sheaves on $C\times C$:

$$\begin{tikzcd}[column sep=tiny]
0 \ar[r]& (p^*_1(\text{ad}(E_G)\otimes\kappa)\otimes (p^*_2(\text{ad}(E_G))))\otimes\kappa))\arrow[d, phantom, ""{coordinate, name=Z}] \ar[r]&(p^*_1(\text{ad}(E_G)\otimes\kappa)) \otimes  (p^*_2(\text{ad}(E_G)\otimes\kappa)) \otimes{\mathcal O}_{C\times C}(\Delta) \arrow[dl,
rounded corners,
to path={ -- ([xshift=2ex]\tikztostart.east)
	|- (Z) [near end]\tikztonodes
	-| ([xshift=-2ex]\tikztotarget.west)
	-- (\tikztotarget)}] \\
& \ad(E_G)^{\otimes 2} \ar[r]&0
\end{tikzcd}
$$

where $\text{ad}(E_G)^{\otimes 2}$ is supported on $\Delta\,=\, C$. Let
\begin{equation}\label{y1}
\begin{tikzcd}[column sep=tiny]
0\ar[r]  & H^0(C\times C,\, (p^*_1(\text{ad}(E_G)\otimes\kappa))\otimes (p^*_2(\text{ad}(E_G)\otimes\kappa))) \arrow[d]&{}\\
&H^0(C\times C,\,(p^*_1(\text{ad}(E_G)\otimes\kappa))
\otimes (p^*_2(\text{ad}(E_G)\otimes\kappa))\otimes{\mathcal O}_{C\times C}(\Delta))\ar[r]\arrow[d, phantom, ""{coordinate, name=Z1}]& H^0(C,\, \text{ad}(E_G)^{\otimes 2})\arrow[dl,
rounded corners,
to path={ -- ([xshift=2ex]\tikztostart.east)
	|- (Z1) [near end]\tikztonodes
	-| ([xshift=-2ex]\tikztotarget.west)
	-- (\tikztotarget)}]  \\
&H^1(C\times C,\, (p^*_1(\text{ad}(E_G)\otimes\kappa))\otimes (p^*_2(\text{ad}(E_G)\otimes\kappa))) & {}\end{tikzcd}
\end{equation}

Since 
\begin{eqnarray*}
H^m(C\times C,\, (p^*_1(\text{ad}(E_G)\otimes\kappa))\otimes
(p^*_2(\text{ad}(E_G)\otimes\kappa)))\\
= \ \bigoplus_{i=0}^m
H^i(C,\, \text{ad}(E_G)\otimes\kappa)\otimes H^{m-i}(C,\, \text{ad}(E_G)\otimes\kappa)
\end{eqnarray*}
 from
\eqref{y2n} we conclude that
$$
H^m(C\times C,\, (p^*_1(\text{ad}(E_G)\otimes\kappa))\otimes
(p^*_2(\text{ad}(E_G)\otimes\kappa)))\, =\, 0
$$
for all $m\, \geq\, 0$. Consequently, from \eqref{y1} it is deduced that
\begin{equation}\label{y3}
H^0(C\times C,\,(p^*_1(\text{ad}(E_G)\otimes\kappa))
\otimes (p^*_2(\text{ad}(E_G)\otimes\kappa))\otimes{\mathcal O}_{C\times C}(\Delta))
\,=\, H^0(C,\, \text{ad}(E_G)^{\otimes 2}).
\end{equation}
The isomorphism
\begin{equation}\label{y4c}
\text{ad}(E_G)\, \stackrel{\sim}{\longrightarrow}\, \text{ad}(E_G)^*
\end{equation}
given by the fiberwise Killing form on $\text{ad}(E_G)$ produces a section
\begin{equation}\label{y4a}
\gamma\, \in\, H^0(C,\, \text{ad}(E_G)^{\otimes 2}).
\end{equation}
{}From the construction of $\gamma$ it is evident that the composition of homomorphisms
\begin{equation}\label{y4b}
\text{ad}(E_G)\,\stackrel{\sim}{\longrightarrow}\, \text{ad}(E_G)^* \,
\stackrel{\gamma}{\longrightarrow}\, \text{ad}(E_G)
\end{equation}
coincides with the identity map of $\text{ad}(E_G)$, where the first isomorphism
is the one in \eqref{y4c}

Let
\begin{equation}\label{y4}
\widetilde{\Gamma}\, \in\, H^0(C\times C,\,(p^*_1(\text{ad}(E_G)\otimes\kappa))
\otimes (p^*_2(\text{ad}(E_G)\otimes\kappa))\otimes{\mathcal O}_{C\times C}(\Delta))
\end{equation}
be the section taken to $\gamma$ (see \eqref{y4a}) by the isomorphism in \eqref{y3}. On the other hand,
the restriction of $(p^*_1\otimes\kappa)\otimes (p^*_2\otimes\kappa)\otimes
{\mathcal O}_{C\times C}(\Delta)$ to the subscheme $2\Delta\, \subset\, C\times C$
has a canonical trivialization \cite[p.~688, Theorem 2.2]{BR}. Restricting
the section $\widetilde{\Gamma}$ in \eqref{y4} to $2\Delta$, and invoking the
trivialization of the restriction of $(p^*_1\otimes\kappa)\otimes (p^*_2\otimes\kappa)\otimes
{\mathcal O}_{C\times C}(\Delta)$ to $2\Delta\, \subset\, C\times C$, we obtain a section
\begin{equation}\label{y5}
\Gamma\,\, \,\in\, H^0\left(2\Delta,\,((p^*_1\text{ad}(E_G))
\otimes (p^*_2\text{ad}(E_G)))\big\vert_{2\Delta}\right).
\end{equation}
The restriction of $\Gamma$ to $\Delta\, \subset\, 2\Delta$ evidently coincides
with $\gamma$ (see \eqref{y4a}) using the identification of $\Delta$ with $C$.
Since the composition of homomorphisms in \eqref{y4b}
coincides with the identity map of $\text{ad}(E_G)$, it follows that $\Gamma$ defines
an algebraic connection on the vector bundle $\text{ad}(E_G)$.

Now using lemma \ref{lem1}, the above algebraic connection on $\text{ad}(E_G)$
produces an algebraic connection on the principal $G$--bundle $E_G$. Therefore, we have
the following:

\begin{theorem}\label{th1}
Take a theta characteristic $\kappa$ on $C$. We assume that if $\dim{\gfrak}$ is odd,
then $\kappa$ is an even theta characteristic. Take any principal $G$--bundle
$E_G\, \in\, \mathcal{M}^0_{{G}}$ that lies in the nonempty Zariski open subset of
$\mathcal{M}^0_{{G}}$ whose image under the map $F$ in Corollary \ref{cor:nonzero2}
lies in the complement of the divisor $\Xi_{\kappa}\, \subset\, \mathcal{M}_{\SO_{\dim \gfrak}}$.
Then $E_G$ has a natural algebraic connection.
\end{theorem}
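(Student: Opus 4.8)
The plan is to combine the twisted cohomology vanishing furnished by Corollary~\ref{cor:nonzero2} with the ``diagonal in $C\times C$'' construction of a canonical connection on a vector bundle all of whose twisted cohomology vanishes, and then to transport that connection from $\ad(E_G)$ down to $E_G$ using Lemma~\ref{lem1}.

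First I would pin down the cohomological input. Since $E_G$ lies in the open substack of Corollary~\ref{cor:nonzero2} we have $F(E_G)\notin\Xi_\kappa$, so by the description~\eqref{s4} of $\Xi_\kappa$ it follows that $H^0(C,\,\ad(E_G)\otimes\kappa)=0$. Because $\kappa$ is a theta characteristic, $\deg\kappa=g-1$, and $\ad(E_G)$ has degree $0$ (it is self-dual via the Killing form); hence Riemann--Roch gives $\chi(C,\,\ad(E_G)\otimes\kappa)=\dim\gfrak\,(g-1)+\dim\gfrak\,(1-g)=0$, so $H^1(C,\,\ad(E_G)\otimes\kappa)=0$ as well. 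This is precisely the vanishing~\eqref{y2n} needed below.

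Next I would construct the canonical connection on the vector bundle $\ad(E_G)$, following \cite{BB},~\cite{BH} and the kernel trivialization of~\cite{BR}. On $C\times C$, with projections $p_1,p_2$ and reduced diagonal $\Delta$, the line bundle $(p_1^*\kappa)\otimes(p_2^*\kappa)\otimes\mathcal O_{C\times C}(\Delta)$ is canonically trivial on the second-order neighborhood $2\Delta$ by \cite[p.~688, Theorem~2.2]{BR}. Twisting the structure sequence of $\mathcal O_{C\times C}(\Delta)$ by $p_1^*(\ad(E_G)\otimes\kappa)\otimes p_2^*(\ad(E_G)\otimes\kappa)$ and passing to cohomology, the K\"unneth formula together with the vanishing of $H^\bullet(C,\,\ad(E_G)\otimes\kappa)$ forces all of $H^\bullet\!\bigl(C\times C,\,p_1^*(\ad(E_G)\otimes\kappa)\otimes p_2^*(\ad(E_G)\otimes\kappa)\bigr)$ to vanish, whence from~\eqref{y1} one obtains the isomorphism~\eqref{y3}. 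The fibrewise Killing form gives the self-duality~\eqref{y4c} and thus a distinguished section $\gamma\in H^0(C,\,\ad(E_G)^{\otimes2})$ whose associated endomorphism is $\mathrm{Id}_{\ad(E_G)}$ (see~\eqref{y4a}--\eqref{y4b}); let $\widetilde\Gamma$ be its preimage under~\eqref{y3}. Restricting $\widetilde\Gamma$ to $2\Delta$ and using the \cite{BR} trivialization produces the section $\Gamma$ of~\eqref{y5}, which restricts to $\gamma=\mathrm{Id}$ on $\Delta$; this is exactly the datum of an algebraic connection on $\ad(E_G)$, in the description of connections by sections over the second-order neighborhood of the diagonal. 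Every choice made here --- the K\"unneth identification, the Killing section $\gamma$, and the canonical trivialization on $2\Delta$ --- is canonical, so the resulting connection on $\ad(E_G)$ is canonical.

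Finally I would descend to $E_G$: by Lemma~\ref{lem1} the $G$-equivariant orthogonal projection $P\colon\mathfrak{gl}(\gfrak)\to\gfrak$ (with respect to the trace form on $\mathfrak{gl}(\gfrak)$, which restricts to a scalar multiple of the Killing form of $\gfrak$) induces a natural map $\Phi\colon\mathcal C(\ad(E_G))\to\mathcal C(E_G)$ with $\Phi\circ\Phi_0=\mathrm{Id}$; applying $\Phi$ to the connection just built yields the desired algebraic connection on $E_G$. I expect the main obstacle to be the middle step --- namely verifying, in the present setting, that a section over $2\Delta$ restricting to $\mathrm{Id}$ on $\Delta$ is the same datum as a connection (the Biswas--Raina dictionary between connections and second-order neighborhood data), together with the bookkeeping of the line-bundle identifications on $\Delta$ and $2\Delta$. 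Once this is granted, the vanishing~\eqref{y2n} does the rest and the descent via Lemma~\ref{lem1} is formal.
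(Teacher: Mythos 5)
Your proposal is correct and follows essentially the same route as the paper: the vanishing $H^0(C,\,\mathrm{ad}(E_G)\otimes\kappa)=0=H^1(C,\,\mathrm{ad}(E_G)\otimes\kappa)$ from Corollary~\ref{cor:nonzero2}, the K\"unneth argument on $C\times C$ giving the isomorphism~\eqref{y3}, the Killing-form section $\gamma$ and its lift $\widetilde\Gamma$ restricted to $2\Delta$ via the trivialization of \cite{BR} to produce a canonical connection on $\mathrm{ad}(E_G)$, and finally the descent to $E_G$ via Lemma~\ref{lem1}. Your Riemann--Roch justification of $H^1=0$ is a correct filling-in of a step the paper leaves implicit.
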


\section{Isomorphism of torsors}\label{sec-tor}\label{se6}

Let $Y$ be a smooth complex variety. A torsor on $Y$ for the cotangent bundle $\psi\, :\,\Omega^1_Y\, \longrightarrow\, Y$
is an algebraic fiber bundle
$p\, :\, {\mathcal V}\, \longrightarrow\, Y$ together with an isomorphism
$$
\Phi\, :\, {\mathcal V}\times_Y \Omega^1_Y\, \longrightarrow\, {\mathcal V}\times_Y {\mathcal V}
$$
such that
\begin{enumerate}
\item $\psi\circ p_2\,=\,p \circ\Phi$, where $p_2\, :\, {\mathcal V}\times_Y \Omega^1_Y\, \longrightarrow\,\Omega^1_Y$ is the natural
projection, and

\item $\Psi$ defines an action of the fibers of $\Omega^1_Y$ on the fibers of $\mathcal V$.
\end{enumerate}

Let $M^{rs}_G$ denote the moduli space of regularly stable topologically trivial principal $G$-bundles
on $C$. Recall that the a $G$ bundle is regularly stable if it is stable and it's automorphism group is the center $Z(G)$ of the group $G$.  It is known that $M^{rs}_G$ is the smooth locus \cite{BiHo} of the moduli space of semistable topologically
trivial principal $G$-bundles on $C$ except in the only one case where $G\,=\, {\rm SL}(2, {\mathbb C})$
and $\text{genus}(C)\,=\, 2$.

On $M^{rs}_G$, there are two natural torsors for the cotangent bundle
$\Omega^1_{M^{rs}_G}$ which we will now describe. Note that any $E_G\, \in\, M^{rs}_G$ admits an algebraic
connection \cite{Ra}, \cite{AB}. 

\paragraph{\bf{The first torsor}}Let ${\mathcal C}_G$ denotes the moduli space $G$-connections such that
the underlying principal bundle is in $M^{rs}_G$. In other words, ${\mathcal C}_G$ parametrizes pairs of
the $(E_G,\, D)$, where $E_G\, \in\, M^{rs}_G$ and $D$ is an algebraic connection on $E_G$. Let
$$
\Phi\, :\, {\mathcal C}_G \, \longrightarrow\, M^{rs}_G
$$
be the natural projection that sends any $(E_G,\, D)$ to $E_G$. If a principal $G$-bundle $F_G$
admits an algebraic connection, then the space of all algebraic connections on $F_G$ is an affine space
for $H^0(C,\, \text{ad}(E_G)\otimes K_C)$. Therefore, for the projection $\Phi$, the moduli space
${\mathcal C}_G$ is a torsor over $M^{rs}_G$ for the cotangent bundle $\Omega^1_{M^{rs}_G}$.

\paragraph{\bf{The second  torsor}}To describe the second torsor over $M^{rs}_G$ for the cotangent bundle $\Omega^1_{M^{rs}_G}$, first
recall that
\begin{equation}\label{et}
\text{Pic}(M^{rs}_G)\,=\, {\mathbb Z}\oplus {\rm Tor},
\end{equation}
where ${\rm Tor}$ is a finite abelian group \cite[p.~184, Theorem (a)]{BLS:98}.
Any line bundle $\xi$ of finite order has a canonical integrable algebraic connection. In fact,
if $\xi^{\otimes n}$ is the trivial line bundle, then there is a unique connection on $\xi$ which
induces the trivial connection on the trivial line bundle 

Take a line bundle $L$ on $M^{rs}_G$.
Let $\text{Conn}(L)$ denote the sheaf of algebraic connections on $L$, meaning the space of sections
of $\text{Conn}(L)$ over any open subset $U\, \subset\, M^{rs}_G$ is the space of all algebraic connections
on $L\big\vert_U$. To describe $\text{Conn}(L)$ explicitly, let
$$
0\, \longrightarrow\, {\mathcal O}_{M^{rs}_G} \, \longrightarrow\, \text{At}(L) \, \longrightarrow\,
TM^{rs}_G \, \longrightarrow\,0
$$
be the Atiyah exact sequence for $L$ \cite{At}. Tensoring it with $\Omega^1_{M^{rs}_G}$ we get the
exact sequence
\begin{equation}\label{ep}
0\, \longrightarrow\, \Omega^1_{M^{rs}_G} \, \longrightarrow\, \text{At}(L)\otimes
\Omega^1_{M^{rs}_G} \, \stackrel{\Psi}{\longrightarrow}\,
(TM^{rs}_G)\otimes \Omega^1_{M^{rs}_G} \,=\, \text{End}(TM^{rs}_G) \, \longrightarrow\,0.
\end{equation}
Let $\Psi^{-1}({\rm Id}_{TM^{rs}_G})\, \subset\, \text{At}(L)\otimes\Omega^1_{M^{rs}_G}$ be the inverse
image, under the map $\Psi$ in \eqref{ep}, of the image of the section
$M^{rs}_G\, \longrightarrow\, \text{End}(TM^{rs}_G)$ given by the identity map of
$TM^{rs}_G$. From \eqref{ep} it follows immediately that
$\Psi^{-1}({\rm Id}_{TM^{rs}_G})$ is a torsor over $TM^{rs}_G$ for $\Omega^1_{M^{rs}_G}$.

This $\Omega^1_{M^{rs}_G}$-torsor $\Psi^{-1}({\rm Id}_{TM^{rs}_G})$ is identified with the
$\Omega^1_{M^{rs}_G}$-torsor $\text{Conn}(L)$.

{}From the above observation that any line bundle $\xi$ of finite order has a canonical integrable
algebraic connection it follows immediately that
$$
\text{Conn}(L)\,\,=\,\, \text{Conn}(L\otimes\xi).
$$

Now let $L$ be such that $c_1(L)\, \not=\, 0$ (equivalently, $L$ is \textit{not} of finite order
because of \eqref{et}). We have the following theorem, that was originally shown in \cite{BH} for the group 
$G\,=\, \text{SL}(r, {\mathbb C})$. 

\begin{theorem}
The above two
$\Omega^1_{M^{rs}_G}$-torsors ${\mathcal C}_G$ and $\text{Conn}(L)$ over $M^{rs}_G$ are isomorphic
up to a constant rescaling of the action. This means that there is an algebraic isomorphism
\begin{equation}\label{ei}
I \, :\, \text{Conn}(L) \, \longrightarrow\, {\mathcal C}_G
\end{equation}
of fiber bundles over $M^{rs}_G$, and a nonzero number $c\, \in\, {\mathbb R}$, such that
$$
I(z+v)\,=\, I(z)+cv
$$
for all $z\, \in\, \text{Conn}(L)_y$, $y\, \in\, M^{rs}_G$, and $v\,\in\, (\Omega^1_{M^{rs}_G})_y$.
\end{theorem}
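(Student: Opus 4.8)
The plan is to exhibit both torsors as living naturally inside the total space of the first-jet bundle of the Poincaré-type line bundle on $C\times M^{rs}_G$, and to deduce the isomorphism by comparing the two sections of $\operatorname{End}(TM^{rs}_G)$ that cut them out. First I would recall that a regularly stable bundle $E_G$ has automorphism group exactly $Z(G)$, so $H^0(C,\operatorname{ad}(E_G))=0$ and (by Serre duality) $H^1(C,\operatorname{ad}(E_G))=0$ as well; hence the infinitesimal deformation theory of $E_G$ is unobstructed and $T_{[E_G]}M^{rs}_G\cong H^1(C,\operatorname{ad}(E_G))$, $\Omega^1_{[E_G]}M^{rs}_G\cong H^0(C,\operatorname{ad}(E_G)\otimes K_C)$ by Serre duality. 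This makes the assertion that $\mathcal{C}_G$ is an $\Omega^1_{M^{rs}_G}$-torsor precise fiberwise: the fiber over $[E_G]$ is the affine space of algebraic connections on $E_G$, modeled on $H^0(C,\operatorname{ad}(E_G)\otimes K_C)$, and this is exactly the stated cotangent space.

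Next I would realize $\operatorname{Conn}(L)$ concretely via the exact sequence \eqref{ep} already displayed, so that $\operatorname{Conn}(L)=\Psi^{-1}(\mathrm{Id}_{TM^{rs}_G})\subset \operatorname{At}(L)\otimes\Omega^1_{M^{rs}_G}$. The key geometric input is that $\mathcal{C}_G$ admits an analogous description: there is a natural line bundle $\Theta$ on $M^{rs}_G$ (the determinant-of-cohomology, or equivalently the pullback of $\mathscr{L}_{\widetilde G}$ on the stack, as in Section~4) whose Atiyah algebroid is canonically identified, via the local description of connections on $E_G$ through the Hecke/uniformization picture, with the algebroid whose sections are symbols of connections; concretely, the moduli of connections $\mathcal{C}_G$ is the torsor of splittings of the first-order neighborhood of $M^{rs}_G$ coupled to $\operatorname{ad}(E_G)$, and this matches $\operatorname{Conn}(\Theta)$ up to the scalar given by the level. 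This is essentially the content of the Beilinson–Drinfeld style identification; I would cite the $\operatorname{SL}(r)$ case \cite{BH} and indicate that the argument there — comparing the Atiyah class of $\Theta$ with the class of the torsor $\mathcal{C}_G$ in $H^1(M^{rs}_G,\Omega^1_{M^{rs}_G})$ — goes through verbatim for general semisimple $G$, since the only ingredients are: $\operatorname{Pic}(M^{rs}_G)$ has rank one modulo torsion (equation \eqref{et}), the determinant line bundle generates it modulo torsion, and the Atiyah class of the determinant line bundle equals (a nonzero multiple of) the class of $\mathcal{C}_G$.

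The decisive step, then, is the computation that both torsors have the \emph{same} class in $H^1(M^{rs}_G,\Omega^1_{M^{rs}_G})\cong H^1(M^{rs}_G,\Omega^1)$ up to a nonzero real scalar. For $\mathcal{C}_G$ the class is the obstruction to a global algebraic connection on the universal bundle on $C\times M^{rs}_G$ after integration over $C$; for $\operatorname{Conn}(L)$ the class is $c_1(L)$ viewed in Hodge cohomology via the Atiyah map. Since $\operatorname{Pic}(M^{rs}_G)\otimes\mathbb{R}$ is one-dimensional, any line bundle $L$ with $c_1(L)\neq 0$ has Atiyah class a nonzero multiple of that of $\Theta$, hence (by the previous paragraph) a nonzero multiple $c$ of the class of $\mathcal{C}_G$; the two torsors therefore differ precisely by rescaling the $\Omega^1_{M^{rs}_G}$-action by $c$, which yields the isomorphism $I$ of \eqref{ei} with $I(z+v)=I(z)+cv$. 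The main obstacle I anticipate is making the identification of $\mathcal{C}_G$ with $\operatorname{Conn}(\Theta)$ rigorous in the stacky/non-fine-moduli setting — one must either work on the moduli stack $\mathcal{M}_G^0$ where a universal bundle exists and descend, or use that the complement of $M^{rs}_G$ in the semistable moduli space has codimension $\geq 2$ (outside the excluded case $G=\operatorname{SL}(2,\mathbb{C})$, $g=2$) so that line bundles and their Atiyah classes extend — and to check that the scalar $c$ is genuinely real and nonzero, which follows from the positivity of the level of the determinant bundle.
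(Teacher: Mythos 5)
Your overall framework---classifying $\Omega^1_{M^{rs}_G}$-torsors by their classes in $H^1(M^{rs}_G,\,\Omega^1_{M^{rs}_G})$ and observing that proportional classes yield an isomorphism after rescaling the action---is sound, and the identification of the class of $\mathrm{Conn}(L)$ with the Atiyah class of $L$, combined with \eqref{et} and the canonical flat connection on torsion line bundles, is fine. The genuine gap is the decisive step: that the class of ${\mathcal C}_G$ is a nonzero multiple of the Atiyah class of the theta/determinant line bundle. This is the entire content of the theorem, and you only assert it, saying the argument of \cite{BH} ``goes through verbatim'' for general semisimple $G$. It does not go through verbatim: the $\SL(r,{\mathbb C})$ argument of \cite{BH} is built on the canonical connection carried by bundles lying outside the theta divisor, and for a general $G$ with simple Lie algebra the existence of such a connection on a nonempty open locus is precisely what requires Corollary \ref{cor:nonzero2} (nonvanishing of the pulled-back Pfaffian section, i.e.\ the image of $\mathcal{M}^0_{G}$ under $\Ad$ is not contained in $\Xi_{\kappa}$, which rests on the conformal-block argument and the parity condition on $\kappa$ when $\dim\gfrak$ is odd) and Theorem \ref{th1}. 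You invoke neither these inputs nor a substitute (for instance a Beilinson--Drinfeld or \cite{BB} type identification actually carried out for general $G$), so the key proportionality of torsor classes is left unproved rather than merely deferred.

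For comparison, the paper argues differently: it notes that once Corollary \ref{cor:nonzero2} and Theorem \ref{th1} are available the proof of \cite{BH} applies, and it also gives a reduction to the $\SL(r,{\mathbb C})$ case via the finite morphism $P\colon M^{rs}_G\longrightarrow M_{\SL(r,{\mathbb C})}$, $E_G\longmapsto \operatorname{ad}(E_G)$, of \eqref{ei0}: one pulls back the $\SL(r,{\mathbb C})$ isomorphism and quotients both pulled-back torsors by the subbundle ${\mathcal V}\subset P^*\Omega^1_{M_{\SL(r,{\mathbb C})}}$ determined by the $G$-module decomposition $\mathfrak{sl}(\gfrak)=\gfrak\oplus\gfrak^{\perp}$ of \eqref{ei3}, obtaining $\Omega^1_{M^{rs}_G}$-torsors and the map $I$; this sidesteps both the Atiyah-class computation and the universal-bundle issue you yourself flag (no universal principal $G$-bundle exists on $C\times M^{rs}_G$, only the universal adjoint bundle, so your description of the class of ${\mathcal C}_G$ as an obstruction for ``the universal bundle'' must be reformulated or descended from the stack). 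Finally, a small but real slip: for regularly stable $E_G$ you claim $H^1(C,\operatorname{ad}(E_G))=0$ ``by Serre duality''; in fact $H^1(C,\operatorname{ad}(E_G))$ is the tangent space to the moduli space, of dimension $(g-1)\dim\gfrak$, and what Serre duality applied to $H^0(C,\operatorname{ad}(E_G))=0$ gives is $H^1(C,\operatorname{ad}(E_G)\otimes K_C)=0$.
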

\begin{proof}
In view of Corollary \ref{cor:nonzero2} and Theorem \ref{th1}, the proof of \cite{BH} works for any semisimple 
$G$ whose Lie algebra is simple. Therefore, the above result of \cite{BH} holds for any such $G$. However, from 
the above result on $\text{SL}(r, {\mathbb C})$ it is possible to deduce the same result for all semistable $G$ 
whose Lie algebra $\mathfrak g$ is simple; this will be described below.

Take any semisimple
$G$ whose Lie algebra $\mathfrak g$ is simple. The dimension of $\mathfrak g$ is denoted by $r$. Let
\begin{equation}\label{ei0}
P\, :\, M^{rs}_G\, \longrightarrow\, M_{\text{SL}(r, {\mathbb C})}
\end{equation}
be the finite morphism that sends any $E_G$ to the vector bundle $\text{ad}(E_G)$. Let
\begin{equation}\label{ei2}
P^*I \, :\, P^* \text{Conn}(L) \, \longrightarrow\, P^*{\mathcal C}_{{\rm SL}(r, {\mathbb C})}
\end{equation}
be the pullback of the isomorphism in \eqref{ei} by $P$ in \eqref{ei0}. Note that both $P^* \text{Conn}(L)$
and $P^*{\mathcal C}_{{\rm SL}(r, {\mathbb C})}$ are torsors over $M^{rs}_G$ for the pulled back vector bundle $P^*
\Omega^1_{M_{{\rm SL}(r, {\mathbb C})}}$.

Next we will describe a subbundle of $P^*\Omega^1_{M_{{\rm SL}(r, {\mathbb C})}}$.

Consider the injective homomorphism
$${\mathfrak g}\, \longrightarrow\, \mathfrak{sl}({\mathfrak g})
\,=\, \mathfrak{sl}(r, {\mathbb C})$$
given by the adjoint action of $G$ on $\mathfrak g$. The image of $\mathfrak g$ in
$\mathfrak{sl}({\mathfrak g})$ will also be denoted by $\mathfrak g$. The Killing form
on $\mathfrak{sl}({\mathfrak g})$ restricts to a nonzero constant multiple of the Killing form
on $\mathfrak g$. Let
\begin{equation}\label{ei3}
\mathfrak{sl}({\mathfrak g})\,=\, {\mathfrak g}\oplus {\mathfrak g}^\perp
\end{equation}
be the orthogonal decomposition with respect to the Killing form
on $\mathfrak{sl}({\mathfrak g})$.

The decomposition of $\mathfrak{sl}({\mathfrak g})$ in \eqref{ei3} is a decomposition of $G$-modules.
Therefore, for any principal $G$-bundle $E_G$ on $C$, the decomposition in \eqref{ei3} produces
a decomposition
\begin{equation}\label{ei4}
\text{End}^0(\text{ad}(E_G))\,=\, \text{ad}(E_G)\oplus {\mathcal V}(E_G),
\end{equation}
where $\text{End}^0(\text{ad}(E_G))\, \subset\, \text{End}(\text{ad}(E_G))$ is the subbundle of
co-rank one defined by the endomorphisms of trace zero and 
$$
{\mathcal V}(E_G)\,:=\, E_G({\mathfrak g}^\perp)
$$
is the vector bundle on $C$ associated to $E_G$ for the $G$-module ${\mathfrak g}^\perp$ in \eqref{ei3}.
Let
\begin{equation}\label{ei5}
H^0(C,\, \text{End}^0(\text{ad}(E_G))\otimes K_C)\,=\, H^0(C,\, \text{ad}(E_G)\otimes K_C)\oplus
H^0(C,\, {\mathcal V}(E_G)\otimes K_C)
\end{equation}
be the decomposition corresponding to the decomposition in \eqref{ei4}. Note that for
any $E_G\,\in \,M^{rs}_G$, the fiber of $P^* \Omega_{M_{\text{SL}(r, {\mathbb C})}}$
(see \eqref{ei0}) over $E_G$ is $H^0(C,\, \text{End}^0(\text{ad}(E_G))\otimes K_C)$.

Let ${\mathcal V}$ denote the vector bundle over $M^{rs}_G$ whose fiber over any $E_G\,\in\,
M^{rs}_G$ is $H^0(C,\, {\mathcal V}(E_G)\otimes K_C)$. From \eqref{ei5} we conclude that
$\mathcal V$ is a direct summand of $P^* \Omega_{M_{\text{SL}(r, {\mathbb C})}}$.

Consider the isomorphism $P^*I$ in \eqref{ei2} between the two $P^*\Omega^1_{M_{\text{SL}(r,
{\mathbb C})}}$-torsors
$P^* \text{Conn}(L)$ and $P^*{\mathcal C}_{{\rm SL}(r, {\mathbb C})}$. Quotienting both
$P^*\Omega^1_{M_{\text{SL}(r, {\mathbb C})}}$-torsors $P^* \text{Conn}(L)$ and
$P^*{\mathcal C}_{{\rm SL}(r, {\mathbb C})}$ by the subbundle ${\mathcal V}\, \subset\,
P^*\Omega^1_{M_{\text{SL}(r, {\mathbb C})}}$ we obtain a generalization of the isomorphism
$I$ in \eqref{ei} for any semisimple $G$ whose Lie algebra is simple.
\end{proof}

\section{A decomposition of the tangent bundle}\label{se7}

Take a theta characteristic $\kappa$ on $C$. We assume that if $\dim{\gfrak}$ is odd,
then $\kappa$ is an even theta characteristic. We further assume that $\kappa$ has a section (there is always
such a $\kappa$). Take any
\begin{equation}\label{a1}
\sum_{i=1}^m \mu_i.c_i \,\,\in\,\, \big\vert \kappa\big\vert;
\end{equation}
so the line bundle ${\mathcal O}_C(\sum_{i=1}^m \mu_i.c_i)$, where $\{c_i\}_{i=1}^m$ are distinct points of
$C$ and $\mu_i$ are positive integers, is holomorphically isomorphic to $\kappa$. Note that
$$
\sum_{i=1}^m \mu_i\,=\, g-1,
$$
where $g\,=\, \text{genus}(C)$. Let
\begin{equation}\label{a2}
{\mathcal U}\,\,\subset\, \, M^{rs}_G
\end{equation}
denote the nonempty Zariski open subset that parametrizes all $E_G\, \in\, M^{rs}_G$ such that
\begin{equation}\label{a3}
H^0(C,\, \text{ad}(E_G)\otimes\kappa)\,=\, 0\,=\, H^0(C,\, \text{ad}(E_G)\otimes\kappa);
\end{equation}
from Corollary \ref{cor:nonzero2} we know that $\mathcal U$ is nonempty. (Since
$\chi(\text{ad}(E_G)\otimes\kappa)\,=\, 0$, there is only one condition in \eqref{a3}.)

For any $E_G\, \in\, {\mathcal U}$ (see \eqref{a2}), the vector bundle $\text{ad}(E_G)\otimes
{\mathcal O}_C(\sum_{i=1}^m \mu_i.c_i)$ will be denoted by $\widetilde{\rm ad}(E_G)$ for notational convenience.
Consider the natural short exact sequence of coherent sheaves on $C$
\begin{equation}\label{a3b}
0\,\longrightarrow\, \text{ad}(E_G) \,\longrightarrow\, \widetilde{\rm ad}(E_G) \, \longrightarrow\, 
\sum_{i=1}^m \widetilde{\rm ad}(E_G)\big\vert_{\mu_ic_i} \, \longrightarrow\, 0.
\end{equation}
Let
\begin{equation}\label{a4}
	\begin{tikzcd}[column sep=normal]
H^0(C,\, \widetilde{\rm ad}(E_G))\ar[r] & 
\sum_{i=1}^m \widetilde{\rm ad}(E_G)\big\vert_{\mu_ic_i}
 \arrow[d, phantom, ""{coordinate, name=Z1}] \ar[r,"\Phi"]& H^1(C,\, \text{ad}(E_G))\arrow[dl,
rounded corners,
to path={ -- ([xshift=2ex]\tikztostart.east)
	|- (Z1) [near end]\tikztonodes
	-| ([xshift=-2ex]\tikztotarget.west)
	-- (\tikztotarget)}] \\
 &H^1(C,\, \widetilde{\rm ad}(E_G))
\end{tikzcd}
\end{equation}
be the long exact sequence of cohomologies associated to the short exact sequence of sheaves
in \eqref{a3b}. From \eqref{a3} it follows immediately that the homomorphism $\Phi$ in \eqref{a4}
is an isomorphism. Note that $H^1(C,\, \text{ad}(E_G))$ is the fiber of the tangent bundle
$TM^{rs}_G$ at the point $E_G\, \in\, M^{rs}_G$.

For $1\,\leq\, i\, \leq\, m$, let
\begin{equation}\label{wi}
{\mathcal W}_i\, \longrightarrow\, {\mathcal U}
\end{equation}
be the vector bundle whose fiber over any $E_G\, \in\, M^{rs}_G$ is $\widetilde{\rm ad}(E_G)\big\vert_{\mu_ic_i}$.
Note that there is a unique universal adjoint bundle
\begin{equation}\label{a}
{\mathcal A}\,\, \longrightarrow\,\, C\times M^{rs}_G.
\end{equation}
It should be clarified that there may not be a universal principal $G$-bundle over
$C\times M^{rs}_G$. The vector bundle ${\mathcal W}_i$ in \eqref{wi} is the restriction of
${\mathcal A}\otimes p^*_1 {\mathcal O}_C(\sum_{i=1}^m \mu_i.c_i) $ to $(\mu_i c_i)\times{\mathcal U}\,\subset\, C\times M^{rs}_G$,
where $p_1\, :\, C\times M^{rs}_G \, \longrightarrow\, C$ is the natural projection and
$\mathcal A$ is the vector bundle in \eqref{a}. Since $\Phi$ in
\eqref{a4} is an isomorphism, we have the following decomposition:
\begin{equation}\label{a5}
T{\mathcal U}\,\,=\,\, \bigoplus_{i=1}^{m} {\mathcal W}_i.
\end{equation}

We have the following natural question:

\begin{question}\label{q1}
Take any $1\, \leq\, i\, \leq\, m$. Is the holomorphic distribution
$$
{\mathcal W}_i\,\,\subset\,\, T{\mathcal U}
$$
on $\mathcal U$ (in \eqref{a5}) integrable?
\end{question}

Next assume that all $m_i\,=\, 1$ for all $1\,\leq\, i\, \leq\, m$ (see \eqref{a1}). So we have $m\,=\, g-1$.

Note that the Poincar\'e adjunction formula says that the fiber of ${\mathcal O}_C(c_1+\cdots +c_{g-1})$
over any $c_i$, $1\, \leq\, i\, \leq\, g-1$, is identified with the fiber $T_{c_i}C$ of the tangent bundle
over the point $c_i$ (see \cite[p.~146]{GH}). For $1\, \leq\, i\, \leq\, g-1$, let
\begin{equation}\label{a6}
{\mathcal L}_i\,\, :=\, \, {\mathcal U}\times (T_{c_i}C)^{\otimes 2} \, \longrightarrow\, {\mathcal U}
\end{equation}
be the trivializable line bundle over $\mathcal U$ with fiber $(T_{c_i}C)^{\otimes 2}$. The Killing
form on $\mathfrak g$ produces a fiberwise nondegenerate symmetric pairing
$$
{\mathcal A}\otimes {\mathcal A}\,\, \longrightarrow\,\, {\mathcal O}_{C\times M^{rs}_G},
$$
where $\mathcal A$ is the vector bundle in \eqref{a}.
Using it we have a homomorphism
\begin{equation}\label{a7}
\varphi_i\,\,:\,\, {\mathcal W}_i\otimes {\mathcal W}_i\,\, \longrightarrow\,\, {\mathcal L}_i,
\end{equation}
where ${\mathcal W}_i$ and ${\mathcal L}_i$ are the vector bundles
constructed in \eqref{wi} and \eqref{a6} respectively. The homomorphism $\varphi_i$ in \eqref{a7}
is evidently symmetric and fiberwise nondegenerate. Now using \eqref{a5} we get a holomorphic symmetric
fiberwise nondegenerate bilinear form on $T{\mathcal U}$.


\begin{thebibliography}{ZZZZZ}

\bibitem[At]{At} M. F. Atiyah, Complex analytic connections
in fibre bundles, \textit{Trans. Amer. Math. Soc.}
\textbf{85} (1957) 181--207.

\bibitem[AB]{AB} H. Azad and I. Biswas, On holomorphic
principal bundles over a compact Riemann surface admitting a flat
connection. \textit{Math. Ann.} \textbf{322} (2002), 333--346.

\bibitem[BaBo]{BaisBouwknegt:87}
F.~A. Bais and P.~G. Bouwknegt, A classification of subgroup
truncations of the bosonic string, {\it Nuclear Phys. B} \textbf{279} (1987),
561--570.

\bibitem[BL]{BeauvilleLaszlo:94}
A. Beauville and Y. Laszlo, Conformal blocks and generalized theta
functions, {\it Comm. Math. Phys.} \textbf{164} (1994), 385--419.

\bibitem[BLS]{BLS:98}
A. Beauville, Y. Laszlo, and C. Sorger, The {P}icard group
of the moduli of {$G$}-bundles on a curve, {\it Compositio Math.} \textbf{112}
(1998), 183--216.

\bibitem[Be1]{Be1} P. Belkale, Strange duality and the Hitchin/WZW connection,
{\it J. Differential Geom.} {\bf 82} (2009), 445--465.

\bibitem[Be2]{Belkale:12}
P. Belkale, Orthogonal bundles, theta characteristics and symplectic
strange duality, {\it Compact moduli spaces and vector bundles}, pp.~185--193, Contemp. Math.,
vol. 564, Amer. Math. Soc., Providence, RI, 2012.

\bibitem[BeBi]{BB} D. Ben-Zvi and I. Biswas, Theta functions and Szeg\H{o} kernels,
{\it Int. Math. Res. Not.} (2003), no. 24, 1305--1340.

\bibitem[BiHo]{BiHo} I. Biswas and N. Hoffmann, Poincar\'{e} families of $G$-bundles on a curve, \textit{Math. Ann.} {\bf 352} (2012), no. 1, 133–154.

\bibitem[BH]{BH} I. Biswas and J. Hurtubise, A canonical connection on bundles
on Riemann surfaces and Quillen connection on the theta bundle, \textit{Adv. Math.} {\bf 389}
(2021), Article 107918.

\bibitem[BR]{BR} I. Biswas and A. K. Raina, Projective structures on a Riemann surface. II,
{\it Internat. Math. Res. Not.} (1999), no. 13, 685--716.

\bibitem[BP]{BoysalPauly:10}
A. Boysal and C. Pauly, Strange duality for {V}erlinde spaces of
exceptional groups at level one, {\it Int. Math. Res. Not.} (2010), no.~4,
595--618.

\bibitem[DN]{DN}
J.-M. Drezet and M.~S. Narasimhan, Groupe de {P}icard des
vari\'{e}t\'{e}s de modules de fibr\'{e}s semi-stables sur les courbes
alg\'{e}briques, {\it Invent. Math.} \textbf{97} (1989), 53--94.

\bibitem[Fa]{Faltings:94}
G. Faltings, A proof for the {V}erlinde formula, {\it J. Algebraic Geom.}
\textbf{3} (1994), 347--374.

\bibitem[GH]{GH} P. Griffiths and J. Harris, {\it Principles of algebraic geometry},
Pure and Applied Mathematics, Wiley-Interscience, New York, 1978.

\bibitem[KW]{KacWakimoto:88}
V.~G. Kac and M. Wakimoto, Modular and conformal invariance
constraints in representation theory of affine algebras, {\it Adv. Math.}
\textbf{70} (1988), 156--236.

\bibitem[KN]{KumarNarasimhan:97}
S. Kumar and M.~S. Narasimhan, Picard group of the moduli spaces of
{$G$}-bundles, {\it Math. Ann.} \textbf{308} (1997), 155--173.

\bibitem[KNR]{KNR:94}
S. Kumar, M.~S. Narasimhan, and A.~Ramanathan, Infinite
{G}rassmannians and moduli spaces of {$G$}-bundles, {\it Math. Ann.} \textbf{300}
(1994), 41--75.

\bibitem[LS]{LaszloSorger:97}
Y. Laszlo and C. Sorger, The line bundles on the moduli of
parabolic {$G$}-bundles over curves and their sections, {\it Ann. Sci. \'Ecole
Norm. Sup.} \textbf{30} (1997), 499--525.

\bibitem[Mu1]{Mukhopadhyay:12}
S. Mukhopadhyay, Rank-level duality of conformal blocks for odd
orthogonal {L}ie algebras in genus 0, {\it Trans. Amer. Math. Soc.} \textbf{368}
(2016), 6741--6778.

\bibitem[Mu2]{Mukhopadhyay:15}
S. Mukhopadhyay, Strange duality of {V}erlinde spaces for {$G_2$} and {$F_4$},
{\it Math. Z.} \textbf{283} (2016), 387--399.

\bibitem[MW]{MukhopadhyayWentworth:17}
S. Mukhopadhyay and R. Wentworth, Generalized theta
functions, strange duality, and odd orthogonal bundles on curves, preprint
(2017).

\bibitem[Ra]{Ra} A. Ramanathan, Stable principal bundles on a compact Riemann surface,
{\it Math. Ann.} {\bf 213} (1975), 129--152.

\bibitem[SW]{SW}
A.~N. Schellekens and N.~P. Warner, Conformal subalgebras of
{K}ac-{M}oody algebras, {\bf Phys. Rev. D} \textbf{34} (1986), 3092--3096.

\bibitem[TUY]{TUY:89}
A. Tsuchiya, K. Ueno, and Y. Yamada, Conformal field theory
on universal family of stable curves with gauge symmetries, {\it Integrable
systems in quantum field theory and statistical mechanics}, pp.~459--566, Adv. Stud. Pure
Math., vol.~19, Academic Press, Boston, MA, 1989.

\end{thebibliography}
\end{document}